\chardef\bslash=`\\ % p. 424, TeXbook
\newtheorem{thm}{Theorem}[section]
\newtheorem{cor}[thm]{Corollary}
\newtheorem{prop}[thm]{Proposition}
\newtheorem{example}{Example}
\theoremstyle{definition}
\newtheorem{defn}{Definition}[section]
\newtheorem{rem}{Remark}[section]
\theoremstyle{remark}
\newcommand{\thmref}[1]{Theorem~\ref{#1}}
\newcommand{\corref}[1]{Corollary~\ref{#1}}
\newcommand{\defnref}[1]{Definition~\ref{#1}}
\newcommand{\propref}[1]{Proposition~\ref{#1}}
\newcommand{\F}{\mathcal{F}}
\newcommand{\D}{\mathbb{D}}
\newcommand{\eval}[2][\right]{\relax
  \ifx#1\right\relax \left.\fi#2#1\rvert}
\begin{document}
\title[Radical transversal lightlike hypersurfaces]{RADICAL TRANSVERSAL LIGHTLIKE HYPERSURFACES OF ALMOST COMPLEX MANIFOLDS WITH NORDEN METRIC}  
\author[Galia Nakova]{Galia Nakova}
\address{Department of Algebra and Geometry\\
Faculty of Mathematics and Informatics\\
University of Veliko Tarnovo "St. Cyril and St. Metodius"\\
2 T. Tarnovski str, \, Veliko Tarnovo 5003\\
Bulgaria}
\email{gnakova@gmail.com} 
%\author[Magnes Press]{The Hebrew University Magnes Press\thanks{This
    %is the copyright owner of the style}}
%\author[IJM]{Israel Journal of Mathematics Editorial Board\thanks{This
  %entry is inserted just to show how to typeset several authors with
  %the same address}}
%\address{P.O. Box 39099\\ Jerusalem 91390\\ 
%Israel}

%\date{Received on September, 2007}
%\issueinfo{160}{1}{January}{2008}
%\doiinfo{10.1007/s123456-123456-123456}
\begin{abstract}
In this paper we introduce radical transversal lightlike hypersurfaces of almost complex manifolds with Norden metric. The
study of these hypersurfaces is motivated by the fact that for indefinite almost Hermitian manifolds this class of lightlike 
hypersurfaces does not exist. We also establish that radical transversal lightlike hypersurfaces of almost complex manifolds
with Norden metric have nice properties as a unique screen distribution and a symmetric Ricci tensor of the considered 
hypersurfaces of Kaehler manifolds with Norden metric. We obtain new results about lightlike hypersurfaces concerning to
their relations with non-degenerate hypersurfaces of almost complex manifolds with Norden metric. Examples of the
considered hypersurfaces are given.
\end{abstract}
\maketitle
%\tableofcontents

\section{Introduction}\label{sec-1}

There exist two types submanifolds of a semi-Riemannian manifold $(\overline M,\overline g)$ with respect to the induced metric
$g$ by $\overline g$ on the submanifold. If $g$ is non-degenerate or degenerate, the submanifold $(M,g)$ is non-degenerate or
lightlike, respectively. In case $g$ is non-degenerate on $M$, both the tangent bundle $TM$ and the normal bundle $TM^\bot $ of
$M$ are non-degenerate and $TM\cap TM^\bot =\{0\}$. However, in case $(M,g)$ is a lightlike submanifold of $\overline M$, a part
of $TM^\bot $ lies in $TM$. Therefore the geometries of the non-degenerate and the lightlike submanifolds are different. The
general theory of lightlike submanifolds has been developed in \cite{D-B} by K. Duggal and A. Bejancu. The geometry of 
Cauchy-Riemann (CR) lightlike submanifolds of indefinite Kaehler manifolds was presented in \cite{D-B}, too. Some new classes
of lightlike submanifolds of indefinite Kaehler, Sasakian and quaternion Kaehler manifolds were introduced in \cite{D-S} by
K. Duggal and B. Sahin. In \cite{D-B}, \cite{D-S} many applications of lightlike geometry in the mathematical physics were given. 
\par
Lightlike hypersurfaces of indefinite Kaehler manifolds were studied in \cite{D-B}, \cite{D-S}.  In this paper we introduce 
radical transversal lightlike hypersurfaces of almost complex manifolds with Norden metric. Such class of lightlike
hypersurfaces does not exist when the ambient manifold is an indefinite almost Hermitian manifold because its geometry is
different from the geometry of an almost complex manifold with Norden metric. The difference arises due to the fact that the
action of the almost complex structure $\overline J$ on the tangent space at each point of an almost complex manifold with
Norden metric $\overline M$ is an anti-isometry with respect to the metric $\overline g$. The metric $\overline g$ on $\overline M$ is called Norden metric (or B-metric). Moreover, the tensor field $\overline {\widetilde g}$ on $\overline M$ defined by 
$\overline {\widetilde g}(X,Y)=\overline g(\overline JX,Y)$ is also Norden metric on $\overline M$ while in the almost Hermitian
case $\overline {\widetilde g}$ is a 2-form. Both metrics $\overline g$ and $\overline {\widetilde g}$ on $\overline M$ are of a neutral signature. The beginning of the investigations in the geometry of the almost complex manifolds with Norden metric was put
by A. P. Norden \cite{Norden} and the researches have been continued by G. Ganchev, K. Gribachev, D. Mekerov, A. Borisov, V. Mihova
(\cite{GriMekDj}, \cite{GB}, \cite{GGM}).
\par
In Section 2 we recall some preliminaries about lightlike hypersurfaces of semi-Riemannian manifolds, almost complex manifolds 
with Norden metric and almost contact manifolds with B-metric. In Section 3 we define a radical transversal lightlike
hypersurface of an almost complex manifold with Norden metric and prove that a lightlike hypersurface of such manifold is
radical transversal if and only if the screen distribution of the lightlike hypersurface is holomorphic. In Section 4 we
show that a radical transversal lightlike hypersurface of an almost complex manifold with Norden metric has a unique screen
distribution up to a semi-orthogonal transformation. This property is important for the lightlike hypersurface because it 
guarantees that the induced geometrical objects on the hypersurface do not depend on the choice of the screen distribution.
We establish that the Ricci tensor of radical transversal lightlike hypersurface of a Kaehler manifold with Norden metric 
is symmetric, which is not true in general in the lightlike geometry. We close this section by some geometrical characterizations
of the considered hypersurfaces. Since on an almost complex manifold with Norden metric there exist two Norden metrics, in 
\cite{GN} we consider submanifolds which are non-degenerate with respect to the one Norden metric and lightlike with respect to
the other one. Section 5 is devoted to the same topic. We prove that $(M,g)$ is a special non-degenerate hypersurface of an 
almost complex manifold with Norden metric $(\overline M,\overline J,\overline g,\overline {\widetilde g})$ if and only if
$(M,\widetilde g)$ is a radical transversal lightlike hypersurface of $\overline M$, where $g$ and $\widetilde g$ are the
induced metrics by $\overline g$ and $\overline {\widetilde g}$ on $M$, respectively. We find relations between the induced
geometrical objects on the hypersurfaces $(M,g)$ and $(M,\widetilde g)$ of a Kaehler manifold with Norden metric and 
characterize both hypersurfaces. In the last section we give two examples of radical transversal lightlike hypersurfaces.
  
\section{Preliminaries}\label{sec-2}
\subsection{Lightlike hypersurfaces of semi-Riemannian manifolds}
Let $M$ be a hypersurface of an $(m+2)$-dimensional semi-Riemannian manifold $(\overline M,\overline g)$ of index 
$q\in \{1,\ldots ,m+1\}$. $M$ is a lightlike hupersurface of $\overline M$ \cite{D-B} if at any $u\in M$ 
${\rm Rad} T_xM\neq \{0\}$, where ${\rm Rad} T_xM=T_xM\cap T_xM^\bot $. Because of for a hypersurface 
${\rm dim} \left(T_xM^\bot\right)=1$ it follows that ${\rm dim} \left({\rm Rad} T_xM\right)=1$ and ${\rm Rad} T_xM=T_xM^\bot $. 
${\rm Rad} TM$ is called a radical distribution on $M$. Hence, the induced metric $g$ by $\overline g$ on a lightlike hypersurface 
$M$ has a constant rank $m$. Moreover, there exists a non-degenerate complementary   
vector bundle $S(TM)$ of $TM^\bot $ in $TM$, which is called in \cite{D-B} the screen distribution on $\overline M$. For any $S(TM)$
we have a unique transversal vector bundle ${\rm tr}(TM)$ which is a lightlike complementary vector bundle (but not orthogonal) 
to $TM$ in $T\overline M$. So, the following decompositions of $T\overline M$ are valid:
\begin{equation}\label{2.1}
T\overline M=S(TM)\bot \left(TM^\bot \oplus {\rm tr}(TM)\right)=TM\oplus {\rm tr}(TM),
\end{equation}
where by $\bot $ (resp. $\oplus $) is denoted an orthogonal (resp. a non-orthogonal) direct sum. 
By $\Gamma (E)$ is denoted the ${\F}(M)$-module of smooth sections of a vector bundle $E$ over $M$, ${\F}(M)$ being the algebra of smooth functions on $M$.
In (\cite{D-B}, Theorem 1.1, p. 79) it was proved if $(M, g, S(TM))$ is a lightlike hypersurface of $\overline M$, for any non-zero section $\xi $ of $TM^\bot $ 
on a coordinate neighbourhood $U\subset M$, there exists a unique section $N$ of ${\rm tr}(TM)$ on $U$ satisfying:
\begin{equation}\label{2.2}
\overline g(N,\xi )=1, \qquad \overline g(N,N)=\overline g(N,W)=0, \, \, \forall \, W \in \Gamma (S(TM)).
\end{equation}
The induced geometrical objects on a lightlike hypersurface $M$ of a semi-Riemannian manifold $\overline M$ have different
properties from the properties of the ones  on a non-degenerate hypersurface of $\overline M$. Therefore, follow
\cite{D-B}, \cite{D-S} we will recall basic formulas and facts about the induced geometrical objects on a lightlike hypersurface. 
Let $\overline \nabla $ be the Levi-Civita connection on $\overline M$ with respect to $\overline g$.  
The global Gauss and Weingarten formulas are
\begin{equation*}
\begin{array}{ll}
\overline \nabla _XY=\nabla _XY+h(X,Y),\\
\overline \nabla _XV=-A_VX+\nabla ^t_XV, \quad \forall X, Y \in \Gamma (TM), \quad V \in \Gamma ({\rm tr}(TM)),
\end{array}
\end{equation*}
where $\nabla _XY$ and $A_VX$ belong to $\Gamma (TM)$ while $h(X,Y)$ and $\nabla ^t_XV$ belong to $\Gamma ({\rm tr}(TM))$. The induced connection $\nabla $ 
on $M$ is a torsion-free linear connection and in general $\nabla $ is not metric connection. The linear connection $\nabla ^t$ is called an induced linear 
connection on $\Gamma ({\rm tr}(TM))$. The second fundamental form $h$ is symmetric ${\F}(M)$-bilinear form on $\Gamma (TM)$. The shape operator $A_V$ is 
$\Gamma (S(TM))$-valued and it is not self-conjugate with respect to $g$, i.e. $g(A_VX,Y)\neq g(X,A_VY)$.
The local Gauss and Weingarten formulas are
\begin{equation}\label{2.3}
\begin{array}{ll}
\overline \nabla _XY=\nabla _XY+B(X,Y)N,\\
\overline \nabla _XN=-A_NX+\tau (X)N, \quad \forall \, X, Y \in \Gamma (TM_{|U}), 
\end{array}
\end{equation}
where the pair of sections $\{\xi ,N\}$ on $U\subset M$ satisfies (\ref{2.2}), $B$ is a symmetric ${\F}(U)$-bilinear form which is called the local
second fundamental form of $M$ and $\tau $ is a 1-form on $U$. We also have
\begin{equation}\label{2.4}
A_N\xi =0, \qquad B(X,\xi )=0, \, \, \forall \, X \in \Gamma (TM_{|U}).
\end{equation}
Let $P$ denote the projection morphism of $\Gamma (TM)$ on $\Gamma (S(TM))$. The following formulas are the Gauss and Weingarten
equations for the screen distribution $S(TM)$ 
\begin{equation*}
\begin{array}{ll}
\nabla _XPY=\nabla ^*_XPY+h^*(X,PY),\\
\nabla _XU=-A^*_UX+\nabla ^{*t}_XU, \quad \forall \, X, Y \in \Gamma (TM), \, \, U \in \Gamma (TM^\bot ), 
\end{array}
\end{equation*}
where $\nabla ^*_XPY$ and $A^*_UX$ belong to $\Gamma (S(TM))$, $\nabla ^*$ and $\nabla ^{*t}$ are linear connections on  
$\Gamma (S(TM))$ and $\Gamma (TM^\bot )$, respectively; $h^*$ is a $\Gamma (TM^\bot )$-valued ${\F}(M)$-bilinear form on
$\Gamma (TM)\times \Gamma (S(TM))$ and $A^*_U$ is $\Gamma (S(TM))$-valued ${\F}(M)$-bilinear operator on $\Gamma (TM)$. They
are called the screen second fundamental form and screen shape operator of $S(TM)$, respectively. Locally for any
$X, Y \in \Gamma (TM_{|U})$ we have  
\begin{equation}\label{2.5}
\nabla _XPY=\nabla ^*_XPY+C(X,PY)\xi , \quad  \nabla _X\xi =-A^*_\xi X-\tau (X)\xi ,  
\end{equation}
where $C(X,PY)$ is the local screen fundamental form of $S(TM)$. Both local second fundamental forms $B$ and $C$ are related to
their shape operators by
\begin{equation}\label{2.6}
B(X,Y)=g(A^*_\xi X,Y), \quad  C(X,PY)=g(A_NX,PY).
\end{equation}
$\nabla ^*$ is a metric connection, $A^*$ is self-conjugate with respect to $g$ and 
\begin{equation}\label{2.7}
A^*_\xi \xi =0, \qquad h^*(\xi ,PY)=0.
\end{equation}
As the screen distribution $S(TM)$ is not unique, the induced geometrical objects depend on the choice of $S(TM)$. Follow
\cite{D-B}, \cite{D-S} we will present their dependence (or otherwise) on the choice of a screen distribution.   
Let $F=\{\xi ,N,W_i\}, \, i=\{1,\ldots ,m\}$ be a quasi-orthonormal basis of $\overline M$ along $M$, where $\{\xi \}$, $\{N\}$ 
and $\{W_i\}$ are the lightlike basis of $\Gamma (\rm Rad TM_{|U})$, $\Gamma ({\rm tr}(TM)_{|U})$ and the orthonormal basis
of $\Gamma (S(TM)_{|U})$, respectively. Consider two quasi-orthonormal frames fields $F=\{\xi ,N,W_i\}$ and
$F^\prime =\{\xi ,N^\prime ,W^\prime _i\}$ induced on $U\subset M$ by $\{S(TM), {\rm tr}(TM)\}$ and $\{S^\prime (TM), 
{\rm tr}^\prime (TM)\}$, respectively for the same $\xi $. The following relationships between $F$ and $F^\prime $ are valid
\begin{equation}\label{2.8}
\begin{array}{ll}
W^\prime _i=\sum \limits_{j=1}^{m}W^j_i(W_j-\epsilon _j{\rm f}_j\xi ) , \quad
N^\prime =N-\displaystyle{\frac{1}{2}\left\{\sum \limits_{i=1}^{m}\epsilon _i({\rm f}_i)^2\right\}}\xi +
\sum \limits_{i=1}^{m}{\rm f}_iW_i \, ,
\end{array}
\end{equation}
where $\{\epsilon _1,\ldots ,\epsilon _m\}$ is the signature of the orthonormal basis $\{W_i\}$ and $W^j_i$, ${\rm f}_i$ are
smooth functions on $U$ such that $(W^j_i)$ are $m\times m$ semi-orthogonal matrices. It was proved (\cite{D-B}, \cite{D-S}) that
$B$ is independent of the choice of $S(TM)$, but both $B$ and $\tau $ depend on the choice of a section 
$\xi \in \Gamma (\rm Rad TM_{|U})$. Moreover, relationships between the induced objects  $\{\nabla , \tau , A_N, A^*_\xi , C\}$ and  
$\{\nabla ^\prime , \tau ^\prime , A^\prime _{N^\prime }, A^{*\prime }_\xi , C^\prime \}$ by the screen distributions $S(TM)$ 
and $S^\prime (TM)$, respectively, were given.
%\begin{equation}\label{2.9}
%\begin{array}{lllll}
%\nabla ^\prime _XY=\nabla _XY+B(X,Y)\left\{\displaystyle{\frac{1}{2}\left(\sum \limits_{i=1}^{m}\epsilon _i({\rm f}_i)^2
%\right)\xi -\sum \limits_{i=1}^{m}{\rm f}_iW_i}\right\} , \\
%\tau ^\prime (X)=\tau (X)+B(X,N^\prime -N) , \\
%A^\prime _{N^\prime }X=A_NX+\sum \limits_{i=1}^{m}\left\{\epsilon _i{\rm f}_iX({\rm f}_i)-\tau (X)\epsilon _i({\rm f}_i)^2\right. \\
%\quad \qquad \left.-\displaystyle{\frac{1}{2}\epsilon _i({\rm f}_i)^2B(X,N-N^\prime )-{\rm f}_iC(X,W_i)}\right\}\xi \\
%\quad \qquad +\sum \limits_{i=1}^{m}\left\{{\rm f}_i\left(\tau (X)+B(X,N^\prime -N)\right)-X({\rm f}_i)\right\}W_i \\
%\quad \qquad -\sum \limits_{i=1}^{m}{\rm f}_i\nabla ^*_XW_i-\displaystyle{\frac{1}{2}\sum \limits_{i=1}^{m}
%\epsilon _i({\rm f}_i)^2A^*_\xi X } , \\
%A^{*\prime }_\xi X=A^*_\xi X +B(X,N-N^\prime )\xi , \\
%C^\prime (X,PY)=C(X,PY)-\displaystyle{\frac{1}{2}||W||^2}B(X,Y)+g(\nabla _XPY,W) 
%\end{array}
%\end{equation}
%for any $X, Y \in \Gamma (TM_{|U})$ and $W=\sum \limits_{i=1}^{m}{\rm f}_iW_i$.
\subsection{Almost complex manifolds with Norden metric}
Let $(\overline M,\overline J,\overline g)$ be a $2n$-dimensional almost complex manifold with Norden metric 
 \cite{GB} , i.e. $\overline J$ is an almost complex structure and $\overline g$ is a metric on $\overline M$ such that:
\begin{equation*}
\overline J^2X=-X, \quad \overline g(\overline JX,\overline JY)=-\overline g(X,Y), \quad X, Y \in \Gamma (T\overline M).
\end{equation*}
The tensor field $\overline {\widetilde g}$ of type $(0,2)$ on $\overline M$ defined by 
$\overline {\widetilde g}(X,Y)=\overline g(\overline JX,Y)$ is a Norden metric on $\overline M$, too.
Both metrics $\overline g$ and $\overline {\widetilde g}$ are necessarily
of signature $(n,n)$. The metric $\overline {\widetilde g}$ is said to be an {\it associated metric} of $\overline M$.
The Levi-Civita connection of $\overline g$ is denoted by $\overline \nabla $.
The tensor field $F$ of type $(0,3)$ on $\overline M$ is defined by 
$F(X,Y,Z)=\overline g((\overline \nabla _X\overline J)Y,Z)$.
Let $\overline {\widetilde \nabla }$ be the Levi-Civita connection of $\overline {\widetilde g}$. Then
$\Phi (X,Y)=\overline {\widetilde \nabla }_XY-\overline \nabla _XY$
is a tensor field of type $(1,2)$ on $\overline M$. Since $\overline \nabla $ and $\overline {\widetilde \nabla }$ 
are torsion free we have $\Phi (X,Y)=\Phi (Y,X)$.
A classification of the almost complex manifolds with Norden metric with respect to the tensor $F$ is given in
\cite{GB} and eight classes are obtained. In \cite{GGM} these classes are characterized by conditions 
for the tensor $\Phi $. The two types of characterization conditions for the class of the Kaehler manifolds with
Norden metric are $F(X,Y,Z)=0$ and $\Phi (X,Y)=0$.

\subsection{Almost contact manifolds with B-metric}
Let $(\overline M,\varphi,\overline \xi,\overline \eta,\overline g)$ be a $(2n+1)$-dimensional almost contact
manifold with B-metric, i.~e. $(\varphi,\overline \xi,\overline \eta)$ is an almost 
contact structure \cite{Bler} and $\overline g$ is a metric \cite{GaMGri} on 
$\overline M$ such that
\begin{equation*}
\begin{array}{l}
\varphi^2X=-id+\overline \eta\otimes \overline \xi, \qquad \overline \eta(\overline \xi)=1, \\ 
\overline g(\varphi X,\varphi Y)=-\overline g(X,Y)+\overline \eta(X)\overline \eta(Y), \\
\end{array}
\end{equation*}
where $id$ denotes the identity transformation and $X, Y \in \Gamma (T\overline M)$. Immediate consequences of the above
conditions are:
\[
\overline \eta \circ \varphi =0, \quad \varphi \overline \xi =0, \quad {\rm rank}\varphi=2n, \quad \overline \eta (X)=
\overline g(X,\overline \xi ), \quad \overline g(\overline \xi ,\overline \xi )=1.
\]
The $2n$-dimensional distribution ${\D}: x\longrightarrow  {\D}_x \subset T_x\overline M$ at each point $x\in \overline M$ 
defined by ${\D}_x={\rm Ker}\eta _x$ is called a contact distribution of $\overline M$. We have the following decomposition
of $T_x\overline M$ which is orthogonal with respect to $\overline g$
\begin{equation}\label{*}
T_x\overline M={\D}_x\bot span \{\overline \xi _x\}.
\end{equation}
The tensor $\overline {\widetilde g}$ given by $\overline {\widetilde g}(X,Y)=\overline g(X,\varphi Y)+
\overline \eta (X)\overline \eta (Y)$ is a B-metric, too. Both metrics $\overline g$ and
$\overline {\widetilde g}$ are indefinite of signature $(n+1,n)$.
Let $\overline \nabla$ be the Levi-Civita connection of the metric $\overline g$. The tensor
field $F$ of type $(0,3)$ on $\overline M$ is defined by
$F(X,Y,Z)=\overline g((\overline \nabla_X\varphi)Y,Z), \quad X,Y,Z \in \Gamma (T\overline M)$.
The following 1-forms are associated with $F$
\[
\theta (X)=\overline g^{ij}F(e_i,e_j,X), \quad \theta ^*(X)=\overline g^{ij}F(e_i,\varphi e_j,X), \quad
\omega (X)=F(\overline \xi ,\overline \xi ,X),
\]
where $\{e_i,\xi \}, \, i=\{1,\ldots ,2n\}$ is a basis of $T_u\overline M$ and $(\overline g^{ij})$ is the inverse
matrix of $(\overline g_{ij})$.
A classification of the almost contact manifolds with B-metric with respect to the tensor $F$ is given in \cite{GaMGri} and 
eleven basic classes  $\F_i (i=1,2,\dots ,11)$ are obtained. 

\section{Radical transversal lightlike hypersurfaces of almost complex manifolds with Norden metric}\label{sec-3}
First in this section we will show that there are lightlike hypersurfaces of an almost complex manifold with Norden metric which 
do not exist when the ambient manifold is an indefinite almost Hermitian manifold. This fact is a motivation for our researches 
in this paper.
\par
Let $(M,g,S(TM))$ be a lightlike hypersurface of $(\overline M,\overline J,\overline g)$, where $\overline M$ is an indefinite
almost Hermitian manifold or an almost complex manifold with Norden metric. Take $\xi \in \Gamma (TM^\bot )$ and according to
(\ref{2.1}) we can write $\overline J\xi $ in the following manner
\begin{equation}\label{3.1}
\overline J\xi =\xi _1+a\xi +bN,
\end{equation}
where $\xi _1 \in \Gamma (S(TM)), \, N \in \Gamma ({\rm tr}(TM))$ and $a, b$ are smooth functions on $\overline M$. 
Since $\overline J^2=-id$, it is clear that the case $\overline J\xi =a\xi $ is impossible. From (\ref{3.1}), by using
(\ref{2.2}) we obtain $b=\overline g(\overline J\xi ,\xi )$. Now, if $\overline M$ is an indefinite almost Hermitian 
manifold, then $b=0$ and from (\ref{3.1}) it follows that $\overline J\xi $ is tangent to $M$. Thus, $\overline J(TM^\bot )$
is always a distribution on $M$ of rank 1 such that $TM^\bot \cap \overline J(TM^\bot )=\{0\}$.
\par
Further, we assume that $\overline M$ is an almost complex manifold with Norden metric. As $\overline g$
is an anti-isometry with respect to $\overline J$, the function $b$ is not zero, in general. In the case $b\neq 0$, the
component of $\overline J\xi $ with respect to $N$ does not vanish. Hence, we can consider lightlike hypersurfaces of
$\overline M$ such that $\overline J(TM^\bot )$ does not belong to $TM$. Our aim in this section is to study one class
of such lightlike hypersurfaces of $\overline M$. 
\begin{defn}\label{Definition 3.1}
Let $(M,g,S(TM))$ be a lightlike hypersurface of an almost complex manifold with Norden metric 
$(\overline M,\overline J,\overline g)$. We say that $M$ is a {\it radical transversal lightlike hypersurface} of $\overline M$
if $\overline J(TM^\bot )={\rm tr}(TM)$.
\end{defn}
\begin{rem}\label{Remark 3.1}
Radical transversal lightlike submanifolds of indefinite Kaehler manifolds were introduced by B. Sahin in \cite{Sahin}. Note 
that the dimension $r$ of the radical distribution of these submanifolds is greater than one.
\end{rem}
\begin{thm}\label{Theorem 3.1}
Let $(M,g,S(TM))$ be a lightlike hypersurface of an almost complex manifold with Norden metric 
$(\overline M,\overline J,\overline g)$. $M$ is a radical transversal lightlike hypersurface of $\overline M$ iff
the screen distribution $S(TM)$ is holomorphic with respect to $\overline J$.
\end{thm}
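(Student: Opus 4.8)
The plan is to exploit the defining algebraic feature of a Norden metric: the almost complex structure $\overline J$ is \emph{self-adjoint} with respect to $\overline g$. Replacing $Y$ by $\overline J Y$ in the anti-isometry relation $\overline g(\overline J X,\overline J Y)=-\overline g(X,Y)$ and using $\overline J^2=-id$ gives
\[
\overline g(\overline J X,Y)=\overline g(X,\overline J Y),\qquad X,Y\in\Gamma(T\overline M),
\]
which is nothing but the symmetry of the associated metric $\overline{\widetilde g}$. Every step then reduces to expanding (\ref{3.1}) in the frame $\{\xi,N\}$ together with $S(TM)$ and sliding $\overline J$ across $\overline g$ by this identity. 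One should keep in mind that, since $\overline g(\xi,\xi)=\overline g(N,N)=0$ and $\overline g(\xi,N)=1$, the $\xi$- and $N$-components of a vector are extracted by pairing with $N$ and with $\xi$ respectively, while its $S(TM)$-component is controlled by the non-degenerate restriction of $g$.

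For the forward implication I assume $M$ is radical transversal, so $\overline J\xi\in\Gamma({\rm tr}(TM))$; comparing with (\ref{3.1}) forces $\xi_1=0$ and $a=0$, i.e.\ $\overline J\xi=bN$ with $b\neq 0$. Applying $\overline J$ then yields $\overline J N=-b^{-1}\xi$. Now for any $W\in\Gamma(S(TM))$ I compute the two components that could take $\overline J W$ out of $S(TM)$: by (\ref{2.2}) and self-adjointness, $\overline g(\overline J W,\xi)=\overline g(W,\overline J\xi)=b\,\overline g(W,N)=0$ and $\overline g(\overline J W,N)=\overline g(W,\overline J N)=-b^{-1}\overline g(W,\xi)=0$. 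Hence both the $N$- and the $\xi$-component of $\overline J W$ vanish, so $\overline J W\in\Gamma(S(TM))$; as $\overline J$ is an isomorphism this gives $\overline J(S(TM))=S(TM)$, i.e.\ the screen is holomorphic.

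For the converse I assume $S(TM)$ holomorphic and analyse (\ref{3.1}). For every $W\in\Gamma(S(TM))$, self-adjointness gives $\overline g(\xi_1,W)=\overline g(\overline J\xi,W)=\overline g(\xi,\overline J W)=0$, because $\overline J W\in\Gamma(S(TM))$ is orthogonal to $\xi\in\Gamma(TM^\bot)$; since $g$ is non-degenerate on $S(TM)$ and $\xi_1\in\Gamma(S(TM))$, this forces $\xi_1=0$ and so $\overline J\xi=a\xi+bN$. The case $b=0$ is excluded (it would give $\overline J\xi=a\xi$ and hence $a^2=-1$), so $b\neq 0$. Finally, using that $\xi$ is lightlike and $\overline J$ an anti-isometry, $0=-\overline g(\xi,\xi)=\overline g(\overline J\xi,\overline J\xi)=\overline g(a\xi+bN,a\xi+bN)=2ab$, whence $a=0$. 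Therefore $\overline J\xi=bN\in\Gamma({\rm tr}(TM))$, so $\overline J(TM^\bot)={\rm tr}(TM)$ and $M$ is radical transversal.

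I expect the only genuine subtlety to be the bookkeeping forced by the non-orthogonal splitting $TM\oplus{\rm tr}(TM)$: one must consistently remember that $\{\xi,N\}$ is a dual-lightlike (hyperbolic) pair, so projecting onto $\xi$ and onto $N$ means pairing with $N$ and with $\xi$, and that the exclusion $b\neq 0$ is what legitimizes dividing by $b$ and deducing $a=0$. Once self-adjointness of $\overline J$ is recorded, the remaining computations are entirely routine.
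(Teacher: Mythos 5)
Your proof is correct and takes essentially the same route as the paper's: both directions expand $\overline J\xi$ via (\ref{3.1}), force $\xi_1=0$ by non-degeneracy of $g$ on $S(TM)$, get $2ab=0$ from $\overline g(\overline J\xi,\overline J\xi)=-\overline g(\xi,\xi)=0$, and exclude $b=0$ via $\overline J^2=-id$, while the forward direction uses the same two pairings $\overline g(\overline JW,\xi)$ and $\overline g(\overline JW,N)$. The only difference is cosmetic: you record the self-adjointness identity $\overline g(\overline JX,Y)=\overline g(X,\overline JY)$ explicitly, which the paper uses silently.
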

\begin{proof}
Let $M$ be a radical transversal lightlike hypersurface of $\overline M$. Then $\overline J\xi =bN$, where the pair
$\{\xi \in \Gamma(TM^\bot ), \, \, N \in \Gamma ({\rm tr}(TM))\}$ satisfies (\ref{2.2}) and $b \in {\F}(\overline M)$. Hence, for an arbitrary $W \in \Gamma (S(TM))$ we have $\overline g(\overline JW,\xi )=\overline g(W,bN)=0$. Thus $\overline JW$ is tangent to $M$.
Moreover, we have $\overline g(\overline JW,N)=\displaystyle{\overline g\left(W,-\frac{1}{b}\xi \right)=0}$, which implies 
$\overline JW \in \Gamma (S(TM))$, i.e. $S(TM)$ is holomorphic.
 Conversely, let $S(TM)$ be holomorphic. Taking into account 
$\overline g(\overline JW,\xi )=0$ and the decomposition (\ref{3.1}), we obtain $\overline g(W,\xi _1)=0$. Since $\overline g$
is non-degenerate on $S(TM)$, from the last equality it follows that $\xi _1=0$. Then (\ref{3.1}) becomes
\begin{equation}\label{3.2}
\overline J\xi =a\xi +bN .
\end{equation}
As $\overline g(\overline J\xi ,\overline J\xi )=0$, by using (\ref{3.2}) we have $2ab=0$. The function $b$ in (\ref{3.2})
is not zero because $TM^\bot \cap \overline J(TM^\bot )=\{0\}$. Therefore $a=0$ and $\overline J\xi =bN$ which means that
$M$ is a radical transversal lightlike hypersurface of $\overline M$.
%$\QED$
\end{proof}
In (\cite{D-B}, p. 194) it was proved that a lightlike hypersurface of an indefinite Hermitian manifold is a CR-manifold.
In order to prove our next result, analogously as in \cite{D-B}, we will use the following 
\begin{thm}{\rm (\cite{D-B}, p. 193)}\label{Theorem 3.2}
A smooth manifold $L$ is a CR-manifold if and only if, it is endowed with an almost complex distribution
$(D,J)$ (i.e. $J(D)=D$) such that 
\begin{equation}\label{3.3}
[JX,JY]-[X,Y] \in D
\end{equation}
and
\begin{equation}\label{3.4}
N_J(X,Y)=0,
\end{equation}
for all $X, Y \in D$.
\end{thm}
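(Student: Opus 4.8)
The plan is to complexify and to recognize the two stated conditions as the real form of the involutivity of the eigenbundle of $J$. Recall that, by definition, $L$ is a CR-manifold exactly when its complexified tangent bundle $TL\otimes\mathbb{C}$ contains a complex subbundle $H$ with $H\cap\overline H=\{0\}$ that is closed under the $\mathbb{C}$-bilinear extension of the Lie bracket. Given the almost complex distribution $(D,J)$, I would set (with $i=\sqrt{-1}$)
\begin{equation*}
H=\{X-i\,JX:X\in D\},
\end{equation*}
the $+i$-eigenbundle of $J$ acting complex-linearly on $D\otimes\mathbb{C}$; since $J^2=-\mathrm{id}$ on $D$, its conjugate $\overline H=\{X+i\,JX:X\in D\}$ is the $-i$-eigenbundle, so that $H\cap\overline H=\{0\}$ holds automatically. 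Thus the theorem reduces to the claim that $H$ is involutive if and only if (\ref{3.3}) and (\ref{3.4}) hold for all $X,Y\in D$.

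First I would compute, for $X,Y\in D$ and $Z=X-i\,JX$, $W=Y-i\,JY$, the bracket by bilinearity:
\begin{equation*}
[Z,W]=\bigl([X,Y]-[JX,JY]\bigr)-i\bigl([X,JY]+[JX,Y]\bigr).
\end{equation*}
Now $[Z,W]\in\Gamma(H)$ means $[Z,W]=U-i\,JU$ for some $U\in D$, which by separating real and imaginary parts is equivalent to the two requirements
\begin{equation*}
U:=[X,Y]-[JX,JY]\in D,\qquad JU=[X,JY]+[JX,Y].
\end{equation*}
The first of these is precisely condition (\ref{3.3}); I would record that it guarantees $U\in D$, so that $JU$ is defined and the second requirement makes sense.

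Second, I would show that, granted (\ref{3.3}), the remaining requirement $JU=[X,JY]+[JX,Y]$ is equivalent to (\ref{3.4}). In the convention of the cited source,
\begin{equation*}
N_J(X,Y)=[JX,JY]-[X,Y]-J[JX,Y]-J[X,JY]=-U-J\bigl([JX,Y]+[X,JY]\bigr).
\end{equation*}
If the imaginary-part requirement holds then $[JX,Y]+[X,JY]=JU$, whence $J\bigl([JX,Y]+[X,JY]\bigr)=J^2U=-U$ (using $U\in D$) and therefore $N_J(X,Y)=-U-(-U)=0$; conversely, $N_J(X,Y)=0$ together with (\ref{3.3}) returns $[JX,Y]+[X,JY]=JU$ after applying $J$ and invoking $J^2=-\mathrm{id}$ on $D$. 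Since involutivity on a local frame of $H$ propagates to all sections (the non-tensorial terms in $[fZ,gW]$ are again multiples of $Z,W\in\Gamma(H)$), this equivalence for all $X,Y\in D$ yields the involutivity of $H$, completing the reduction in both directions.

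I expect the only delicate point to be the bookkeeping around $N_J$: matching the exact sign convention for the Nijenhuis tensor used in \cite{D-B} and, more importantly, being careful that $J$ is defined only on $D$, so that $N_J(X,Y)$ is legitimate only once the combination of brackets has been placed back inside $D$. This is why I would extract (\ref{3.3}) from the real part first and treat (\ref{3.4}) as the imaginary-part condition afterwards. Everything else is the standard real-form reformulation of the integrability of the $i$-eigenbundle and is essentially forced once $H$ is in place.
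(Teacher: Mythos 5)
This theorem is quoted in the paper from \cite{D-B} (p.~193) without proof, so there is no internal argument to compare against; your complexification argument is the standard proof of this fact and it is correct. It matches the definition of a CR-manifold used in \cite{D-B} (a complex subbundle $H\subset TL\otimes\mathbb{C}$ with $H\cap\overline{H}=\{0\}$ closed under the Lie bracket), and splitting $[X-iJX,\,Y-iJY]$ into real and imaginary parts is exactly how the equivalence with (\ref{3.3})--(\ref{3.4}) is obtained. Three small tightenings. First, in the converse you must know $S:=[X,JY]+[JX,Y]\in D$ before you may apply $J$ and invoke $J^{2}=-\mathrm{id}$; this does not follow from the real part of the bracket at the pair $(X,Y)$, as your closing remark suggests, but from (\ref{3.3}) applied to the pair $(X,JY)$ (legitimate since $J(D)=D$), which gives $[JX,J^{2}Y]-[X,JY]=-S\in D$; alternatively one reads (\ref{3.4}) as implicitly asserting that $J$ may be applied, i.e.\ that $S\in D$. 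Second, the frame-propagation step at the end is unnecessary: every section of $H$ is globally of the form $X-iJX$ with $X$ a real section of $D$ (if $V=A+iB$ satisfies $JV=iV$, then $B=-JA$), so your bracket computation already covers arbitrary sections of $H$ and involutivity is immediate. Third, since the theorem is stated as an ``if and only if,'' the forward direction requires recovering $(D,J)$ from an arbitrary CR bundle $H$; this is the standard inverse construction $D=\{Z+\overline{Z}:Z\in H\}$, $J(Z+\overline{Z})=i(Z-\overline{Z})$, well defined precisely because $H\cap\overline{H}=\{0\}$, and it makes the correspondence $(D,J)\leftrightarrow H$ you rely on genuinely bijective, so your reduction then closes both implications.
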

\begin{thm}\label{Theorem 3.3}
A radical transversal lightlike hypersurface $(M,g,S(TM))$ of a complex manifold with Norden metric 
$(\overline M,\overline J,\overline g)$ is a CR-manifold.
\end{thm}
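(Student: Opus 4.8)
The plan is to verify the hypotheses of \thmref{Theorem 3.2} for a suitable almost complex distribution on $M$. By \thmref{Theorem 3.1}, since $M$ is radical transversal, the screen distribution $S(TM)$ is holomorphic, i.e. $\overline J(S(TM))=S(TM)$. So I would take $L=M$, $D=S(TM)$, and $J=\overline J|_{S(TM)}$; this is an almost complex distribution on $M$ because $\overline J^2=-\mathrm{id}$ restricts to $S(TM)$ and $J(D)=D$ by holomorphy. It then remains to establish the two conditions \eqref{3.3} and \eqref{3.4} for all $X,Y\in D=S(TM)$.

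First I would address condition \eqref{3.4}, the vanishing of the Nijenhuis tensor $N_J$ on $D$. Here the key input is that $\overline M$ is a \emph{complex} manifold with Norden metric, so the ambient almost complex structure $\overline J$ is integrable, meaning $\overline N_{\overline J}=0$ on all of $T\overline M$. For $X,Y\in\Gamma(S(TM))$ I would expand $N_J(X,Y)=[JX,JY]-J[JX,Y]-J[X,JY]-[X,Y]$ using $J=\overline J|_{S(TM)}$, compare it termwise with the ambient $\overline N_{\overline J}(X,Y)=0$, and check that the bracket terms involving $\overline J$ reproduce $J$ when restricted, which holds precisely because $S(TM)$ is $\overline J$-invariant. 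The subtlety is that the ambient Nijenhuis tensor is computed with the full structure $\overline J$, whereas $N_J$ uses the restriction $J$; I would argue that on holomorphic $D$ these coincide up to terms that lie in $D$ and vanish, so \eqref{3.4} follows directly from integrability of $\overline J$.

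Next I would handle condition \eqref{3.3}, that $[JX,JY]-[X,Y]\in D=S(TM)$ for $X,Y\in S(TM)$. The natural approach is to use the Gauss formula \eqref{2.3}, writing $\overline\nabla_XY=\nabla_XY+B(X,Y)N$, together with $\overline\nabla$ being torsion-free, so that the Lie bracket satisfies $[X,Y]=\overline\nabla_XY-\overline\nabla_YX=\nabla_XY-\nabla_YX$, which lies in $\Gamma(TM)$ since $B$ is symmetric. Then I would compute $[JX,JY]-[X,Y]$ and show its component along $\xi\in\Gamma(TM^\bot)$ vanishes, so that it lies in $S(TM)$ rather than merely in $TM$. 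To extract the $\xi$-component I would pair with $N$ using \eqref{2.2}, invoke the relation $\overline J\xi=bN$ (equivalently $\overline g(\overline J\,\cdot\,,\xi)$ behaves as an anti-isometry), and use the holomorphy of $S(TM)$ to conclude that the obstruction terms cancel.

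The hard part will be condition \eqref{3.3}, specifically controlling the radical ($\xi$) component of $[JX,JY]-[X,Y]$. Condition \eqref{3.4} is essentially immediate from integrability of $\overline J$ on the complex ambient manifold, but \eqref{3.3} requires genuinely exploiting the Norden anti-isometry $\overline g(\overline JX,\overline JY)=-\overline g(X,Y)$ together with the specific structure $\overline J\xi=bN$ of a radical transversal hypersurface. I expect the cleanest route is to show $\overline g([JX,JY]-[X,Y],N)=0$, which pins down the $\xi$-component, by rewriting the brackets via $\overline\nabla$ and carefully applying the metric compatibility of $\overline\nabla$ together with $(\overline\nabla\overline J)=0$ (valid since $\overline M$ is Kaehler/complex with Norden metric, giving $F=0$ or at least integrability). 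This parallels the Hermitian argument in \cite{D-B}, the only difference being bookkeeping of signs arising from the anti-isometry property of $\overline g$ with respect to $\overline J$.
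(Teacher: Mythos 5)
Your skeleton matches the paper's — take $D=S(TM)$, $J=\overline J|_{S(TM)}$ (legitimate by \thmref{Theorem 3.1}) and verify the two hypotheses of \thmref{Theorem 3.2} — but you have located the difficulty in exactly the wrong place, and this creates two genuine gaps. First, condition \eqref{3.4} is \emph{not} ``essentially immediate from integrability of $\overline J$.'' For $X,Y\in\Gamma(S(TM))$ the brackets $[X,JY]$ and $[JX,Y]$ are tangent to $M$ but need not lie in $S(TM)$: they may have a radical component $Q([X,JY]+[JX,Y])\in\Gamma(TM^\bot)$, and applying $\overline J$ to it produces a vector in $\overline J(TM^\bot)={\rm tr}(TM)$, i.e.\ \emph{transversal} to $M$ — that is precisely what radical transversal means. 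So the discrepancy between $\overline N_{\overline J}$ restricted to $D$ and $N_J$ does not ``lie in $D$ and vanish''; proving that it vanishes is the entire content of the theorem, and until you do, $N_J(X,Y)=[JX,JY]-[X,Y]-J([X,JY]+[JX,Y])$ is not even well defined, since $J$ acts only on $D$. Second, your plan for \eqref{3.3} invokes $(\overline\nabla\,\overline J)=0$. That is the Kaehler-with-Norden-metric condition $F=0$, strictly stronger than the stated hypothesis that $\overline M$ is merely a \emph{complex} manifold with Norden metric (integrable $\overline J$, $F$ arbitrary — the Kaehler class is only one of the eight classes). The hedge ``or at least integrability'' does not repair the sketch: the computation you describe needs to commute $\overline J$ past $\overline\nabla$, which integrability alone does not permit. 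As written, your argument proves at best a weaker theorem with Kaehler ambient manifold.

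The paper's proof shows that no covariant derivatives and no use of the Norden anti-isometry beyond $\overline J(TM^\bot)={\rm tr}(TM)$ are needed: one decomposes the identity $\overline N_{\overline J}(X,Y)=0$ according to $T\overline M=TM\oplus{\rm tr}(TM)$ from \eqref{2.1}. Writing it as in \eqref{3.5}, the part $Z=[JX,JY]-[X,Y]-J(P([X,JY]+[JX,Y]))$ is tangent (brackets of tangent fields are tangent, by symmetry of $h$ and the Gauss formula — the one point where your proposal is on target), while $V=\overline J(Q([X,JY]+[JX,Y]))$ is transversal. By directness of the sum, $Z=0$ and $V=0$ separately. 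Then $V=0$ gives $Q([X,JY]+[JX,Y])=0$, so $[X,JY]+[JX,Y]\in\Gamma(S(TM))$, whereupon $Z=0$ simultaneously yields $N_J(X,Y)=0$, i.e.\ \eqref{3.4}, and $[JX,JY]-[X,Y]=J([X,JY]+[JX,Y])\in\Gamma(S(TM))$, i.e.\ \eqref{3.3} — the two conditions drop out of one decomposition rather than requiring separate arguments of different difficulty. Your pairing-with-$N$ idea can be salvaged in this spirit: with $W=[X,JY]+[JX,Y]$ tangent, integrability gives $[JX,JY]-[X,Y]=\overline JW$, and writing $W=PW+\alpha\xi$ yields $\overline JW=J(PW)+\alpha bN$; tangency of the left-hand side and $b\neq 0$ force $\alpha=0$. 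But this is the paper's argument in disguise, and nowhere does it touch $\overline\nabla$ or $(\overline\nabla\,\overline J)$.
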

\begin{proof}
From \thmref{Theorem 3.1} it follows that $S(TM)$ is an almost complex distribution on $M$ with an almost complex structure
$J$ which is the restriction of $\overline J$ on $S(TM)$. Further, we will show that $S(TM)$ satisfies the conditions
(\ref{3.3}) and (\ref{3.4}). Denote by $\overline N_{\overline J}$ and $N_J$ the Nijenhuis tensors of $\overline J$ and $J$,
respectively. As $\overline M$ is a complex manifold, $\overline N_{\overline J}=0$ on $\overline M$, i.e.
\[
\overline N_{\overline J}(X,Y)=\left[\overline JX,\overline JY\right]-\left[X,Y\right]-\overline J\left(\left[X,\overline JY\right]
+\left[\overline JX,Y\right]\right)=0,
\] 
$\forall X,Y\in \Gamma (T\overline M)$. Then for any $X, Y \in \Gamma (S(TM))$ we have
\begin{equation}\label{3.5}
\begin{array}{ll}
\overline N_{\overline J}(X,Y)=[JX,JY]-[X,Y]-J(P([X,JY]+[JX,Y])) \\
\qquad \qquad \quad-\overline J(Q([X,JY]+[JX,Y]))=0,
\end{array}
\end{equation}
where $P$ and $Q$ are the projection morphisms of $TM$ on $S(TM)$ and $TM^\bot $, respectively. 
Because of the second fundamental form $h$ is symmetric on $TM$, from the Gauss formula we obtain
$[X,Y] \in \Gamma (TM)$ for any $X, Y \in \Gamma (TM)$. This fact and $J(S(TM))=S(TM)$ imply that 
for any $X, Y \in \Gamma (S(TM))$, the vector field $Z=[JX,JY]-[X,Y]-J(P([X,JY]+[JX,Y]))$ is tangent
to $M$. Since $M$ is a radical transversal lightlike hypersurface of $\overline M$, we have that the
vector field $V=\overline J(Q([X,JY]+[JX,Y]))$ belongs to $\Gamma ({\rm tr}(TM))$. From the equality
(\ref{3.5}) it follows that the components $Z$ and $V$ of $\overline N_{\overline J}$ with respect to $TM$ and 
${\rm tr}(TM)$, respectively, are both zero. The vanishing of $V$ shows that $Q([X,JY]+[JX,Y])=0$ and therefore
$[X,JY]+[JX,Y]=P([X,JY]+[JX,Y])$. Hence, $Z$ becomes 
\begin{equation}\label{3.6}
Z=[JX,JY]-[X,Y]-J([X,JY]+[JX,Y])=0,
\end{equation}
for any $X, Y \in \Gamma (S(TM))$. The condition (\ref{3.4}) is valid because the expression for $Z$ from 
(\ref{3.6}) is exactly $N_J$ on $S(TM)$. Moreover, by using of (\ref{3.6}) we obtain $[JX,JY]-[X,Y]=J([X,JY]+[JX,Y])$,
i.e. the condition (\ref{3.3}) is true for any $X, Y \in \Gamma (S(TM))$. Then our assertion follows from \thmref{Theorem 3.2}.
 \end{proof}

\section{The induced geometrical objects on a radical transversal lightlike hypersurface of a Kaehler manifold 
with Norden metric}\label{sec-4}
It is known that the induced geometrical objects on a lightlike hypersurface $M$ are well-defined if $M$ admits a unique
or canonical screen distribution. Now we will investigate this important problem for the introduced lightlike hypersurfaces 
in the previous section. We state
\begin{thm}\label{Theorem 4.1} A radical transversal lightlike hypersurface $(M,g,S(TM))$
of a $2n$-dimensional almost complex manifold with Norden metric $(\overline M,\overline J,\overline g)$ 
has a unique screen distribution up to a semi-orthogonal transformation and a unique lightlike transversal vector bundle.
\end{thm}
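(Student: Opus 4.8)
The plan is to exploit the structural identity obtained in the proof of \thmref{Theorem 3.1}: on a radical transversal hypersurface one has $\overline J\xi =bN$ with $b=\overline g(\overline J\xi ,\xi )\neq 0$, equivalently ${\rm tr}(TM)=\overline J(TM^\bot )$. The decisive observation is that the radical distribution $TM^\bot ={\rm Rad}\,TM$ and the structure $\overline J$ are both independent of any choice of screen, so the line field $\overline J(TM^\bot )$ is intrinsically attached to $M$. Since, by \defnref{Definition 3.1}, a screen renders $M$ radical transversal precisely when its associated transversal bundle coincides with $\overline J(TM^\bot )$, every admissible screen produces the same transversal bundle; this already settles the uniqueness of the lightlike transversal vector bundle.

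First I would pin down the transversal section $N$ itself. Fixing $\xi \in \Gamma (TM^\bot )$ and writing $N$ proportional to $\overline J\xi $, say $N=\lambda \overline J\xi $, the normalization $\overline g(N,\xi )=1$ from (\ref{2.2}) forces $\lambda b=1$, so $N=\frac{1}{b}\overline J\xi $ is uniquely determined by $\xi $ alone. I would then verify that this $N$ satisfies the remaining requirements in (\ref{2.2}) for any holomorphic screen: $\overline g(N,N)=\frac{1}{b^{2}}\overline g(\overline J\xi ,\overline J\xi )=-\frac{1}{b^{2}}\overline g(\xi ,\xi )=0$ because $\xi $ is null and $\overline J$ is an anti-isometry, while for $W\in \Gamma (S(TM))$ one has $\overline g(N,W)=-\frac{1}{b}\overline g(\xi ,\overline JW)=0$ since $\overline JW\in \Gamma (S(TM))\subset \Gamma (TM)$ and $\xi \in \Gamma (TM^\bot )$. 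This is the step that uses the Norden (anti-isometry) structure essentially, and I expect getting these three verifications to fit together — rather than any long computation — to be the main point to secure.

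With $N$ forced, the uniqueness of the screen becomes a short coefficient comparison. Let $S(TM)$ and $S'(TM)$ be two screens both rendering $M$ radical transversal, with quasi-orthonormal frames related by (\ref{2.8}). By the previous paragraph both yield the same null transversal section, so $N'=N$. Substituting into the transformation law $N'=N-\frac{1}{2}\{\sum_i\epsilon_i({\rm f}_i)^2\}\xi +\sum_i{\rm f}_iW_i$ and comparing components in the basis $\{N,\xi ,W_i\}$ of $T\overline M_{|U}$, the $W_i$-components give ${\rm f}_i=0$ for every $i$, the $\xi $-component then reading $\sum_i\epsilon_i({\rm f}_i)^2=0$ consistently.

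Finally, with all ${\rm f}_i=0$, the first relation in (\ref{2.8}) collapses to $W'_i=\sum_j W^j_i W_j$, so the two screen frames span the same subbundle and differ only by the semi-orthogonal matrix $(W^j_i)$. Hence $S'(TM)=S(TM)$ as a distribution, unique up to a semi-orthogonal transformation, which together with the uniqueness of ${\rm tr}(TM)=\overline J(TM^\bot )$ established at the outset completes the argument. The only genuine obstacle is the verification in the second paragraph; everything after $N'=N$ is routine linear-algebraic bookkeeping.
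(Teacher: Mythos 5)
Your proposal is correct, but it runs the paper's argument in the reverse logical order, and the pivotal deduction is different. The paper fixes $\xi$, invokes \thmref{Theorem 3.1} to assert that both screens are holomorphic, and uses $N=\frac{1}{b}\overline J\xi$ to compute $\overline g(W^\prime _i,N)=0$; pairing the first relation of (\ref{2.8}) with $N$ then produces the homogeneous linear system $\sum_{j}W^j_i\epsilon _j{\rm f}_j=0$, whose matrix $(W^j_i)\in O(n-1,n-1)$ is invertible, forcing ${\rm f}_j=0$, after which $N^\prime =N$ and $W^\prime _i=\sum_j W^j_iW_j$ are read off. You instead observe that \defnref{Definition 3.1} makes ${\rm tr}(TM)=\overline J(TM^\bot )$ screen-independent, pin down $N^\prime =N=\frac{1}{b}\overline J\xi $ from the normalization $\overline g(N,\xi )=1$ alone, and only then extract ${\rm f}_i=0$ from the second relation of (\ref{2.8}) by linear independence of $\{\xi ,W_i\}$ in $TM=S(TM)\bot TM^\bot $. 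Your route is slightly more economical: it makes the uniqueness of the transversal bundle manifest at the outset and dispenses with both the holomorphy computation $\overline g(W^\prime _i,N)=0$ and the invertibility of the semi-orthogonal matrix, whereas the paper's route isolates exactly where holomorphy of the second screen (hence \thmref{Theorem 3.1}) enters. Both arguments share the tacit restriction to screens compatible with the radical transversal structure, which is necessary, since a deformation with some ${\rm f}_i\neq 0$ always yields another screen whose transversal bundle differs from $\overline J(TM^\bot )$.

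One small slip in your second paragraph: for a Norden metric $\overline J$ is self-adjoint with respect to $\overline g$ (this is precisely the symmetry of $\overline {\widetilde g}$), so $\overline g(N,W)=+\frac{1}{b}\overline g(\xi ,\overline JW)$ rather than $-\frac{1}{b}\overline g(\xi ,\overline JW)$; the conclusion $\overline g(N,W)=0$ is unaffected, since $\overline JW$ is tangent to $M$ and $\xi \in \Gamma (TM^\bot )$.
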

\begin{proof}
Let $S(TM)$ and $S^\prime (TM)$ be two screen distributions on $M$, ${\rm tr}(TM)$ and ${\rm tr}^\prime (TM)$ their lightlike
transversal vector bundles, respectively. Take the quasi-orthonormal frames fields $F=\{\xi ,N,W_i\}$ and
$F^\prime =\{\xi ,N^\prime ,W^\prime _i\}$ induced on $U\subset M$ by $\{S(TM), {\rm tr}(TM)\}$ and $\{S^\prime (TM), 
{\rm tr}^\prime (TM)\}$, respectively. According to \thmref{Theorem 3.1} $S(TM)$ and $S^\prime (TM)$ are holomorphic. By using this 
fact and $\displaystyle{N=\frac{1}{b}\overline J\xi }$ we compute $\overline g(W^\prime _i,N)=0$ for any $i\in \{1,\ldots ,2n-2\}$.
Thus, after multiplication by $N$ both sides of the first equality in (\ref{2.8}) we get 
$\sum \limits_{j=1}^{2n-2}W^j_i\epsilon _j{\rm f}_j=0\quad (i\in \{1,\ldots ,2n-2\})$, where $(W^j_i)$ is an matrix of
$S(T_xM)$ at any point $x$ of $M$, belonging to $O(n-1, n-1)$. The determinant of the last homogeneous linear system does 
not vanish at any $x\in M$ and hence it has the unique solution ${\rm f}_j=0, \, j\in \{1,\ldots ,2n-2\}$. Then (\ref{2.8})
become $W^\prime _i=\sum \limits_{j=1}^{2n-2}W^j_iW_j\quad (i\in \{1,\ldots ,2n-2\})$, \, $N^\prime =N$, which proves
our assertion. 
\end{proof}
Let $(M,g)$ be a radical transversal lightlike hypersurface of a Kaehler manifold with Norden metric 
$(\overline M,\overline J,\overline g)$. According to \defnref{Definition 3.1} we have $\overline J\xi =bN$. By using
(\ref{2.3}) and the second equality in (\ref{2.5}), for any $X\in \Gamma (TM)$ we compute
\begin{equation}\label{4.1}
\left(\overline \nabla _X\overline J\right)N=\displaystyle{\frac{1}{b}A^*_\xi X+\overline J(A_NX)+
\frac{1}{b}\left(2\tau (X)+\frac{1}{b}(X\circ b)\right)\xi .}
\end{equation}
As $\overline M$ is a Kaehler manifold with Norden metric, the left side of (\ref{4.1}) vanishes. From $A_NX\in \Gamma (S(TM))$
and \thmref{Theorem 3.1} we have $\overline J(A_NX)\in \Gamma (S(TM))$. Then (\ref{4.1}) implies
\begin{equation}\label{4.2}
A^*_\xi X=-b\overline J(A_NX),
\end{equation}
\begin{equation}\label{4.3}
\tau (X)=-\displaystyle{\frac{1}{2b}(X\circ b).}
\end{equation}
An arbitrary $Y\in \Gamma (TM)$ can be decomposed in the following manner
\[
Y=PY+QY=PY+\eta (Y)\xi ,
\]
where $\eta $ is a 1-form on $M$ and $\eta (Y)=\overline g(Y,N)$. Hence for $\overline JY$ we have
\begin{equation}\label{4.4}
\overline JY=\overline J(PY)+\eta (Y)bN.
\end{equation}
By using (\ref{2.3}), (\ref{2.5}), (\ref{4.3}) and (\ref{4.4}) we obtain
\begin{equation}\label{4.5}
\begin{array}{lll}
\left(\overline \nabla _X\overline J\right)Y=\nabla ^*_X\overline J(PY)-b\eta (Y)A_NX-\overline J
\left(P\left(\nabla _XY\right)\right) \\
\qquad \qquad \, \, \, +\displaystyle{\left(C(X,\overline J(PY))+\frac{1}{b}B(X,Y)\right)\xi } \\
\qquad \quad \quad \, \, \, +\displaystyle{\left(B(X,\overline J(PY))+\frac{1}{2}\eta (Y)(X\circ b)+b(\nabla _X\eta )Y\right)N}.
\end{array}
\end{equation}
Since $\left(\overline \nabla _X\overline J\right)Y=0$, the parts belonging to $S(TM)$, $TM^\bot $ and ${\rm tr}(TM)$ of the
right side of (\ref{4.5}) vanish and we have
\begin{equation}\label{4.6}
\nabla ^*_X\overline J(PY)=b\eta (Y)A_NX+\overline J\left(P\left(\nabla _XY\right)\right),
\end{equation}
\begin{equation}\label{4.7}
C(X,\overline J(PY))=-\frac{1}{b}B(X,Y),
\end{equation}
\begin{equation}\label{4.8}
B(X,\overline J(PY))=-\frac{1}{2}\eta (Y)(X\circ b)-b(\nabla _X\eta )Y.
\end{equation}
Substituting $\overline J(PY)$ for $Y$ in (\ref{4.6}), (\ref{4.7}) and taking into account that $P(\overline J(PY))=
\overline J(PY)$, $\eta (\overline J(PY))=0$ and (\ref{4.8}) we find 
\begin{equation}\label{4.9}
\nabla ^*_XPY=-\overline J(P(\nabla _X\overline J(PY))),
\end{equation}
\begin{equation}\label{4.10}
C(X,PY)=\displaystyle{-\frac{1}{2b}\eta (Y)(X\circ b)-(\nabla _X\eta )Y}.
\end{equation}
Having in mind (\ref{4.9}), (\ref{4.10}), (\ref{4.2}) and (\ref{4.3}), the formulas (\ref{2.5}) become
\begin{equation}\label{4.11}
\begin{array}{ll}
\nabla _XPY=-\overline J(P(\nabla _X\overline J(PY)))-\displaystyle{\left(\frac{1}{2b}\eta (Y)(X\circ b)+
(\nabla _X\eta )Y\right)}\xi , \\
\nabla _X\xi =b\overline J(A_NX)+\displaystyle{\frac{1}{2b}(X\circ b)\xi }.
\end{array}
\end{equation}
From (\ref{4.9}) by direct calculations we obtain 
\begin{equation}\label{4.12}
(\nabla ^*_X\overline J)PY=0
\end{equation}
for any $X, Y\in \Gamma (TM)$.
\begin{thm}\label{Theorem 4.1}
Let $(M,g,S(TM))$ be a radical transversal lightlike hypersurface of a Kaehler manifold with Norden metric 
$(\overline M,\overline J,\overline g)$. The shape operator $A_N$ is self-conjugate with respect to $g$ iff
$A_N$ commutes with the action of the almost complex structure $\overline J$ on $S(TM)$.
\end{thm}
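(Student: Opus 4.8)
The plan is to translate both conditions into statements about the bilinear form $(W_1,W_2)\mapsto g(A_NW_1,W_2)$ on $\Gamma(S(TM))$ and to bridge them through a single auxiliary identity coming from the self-conjugacy of $A^*_\xi$. First I would record the structural facts I will lean on. Since $\overline g$ is a Norden metric, $\overline g(\overline JX,\overline JY)=-\overline g(X,Y)$ together with $\overline J^2=-id$ gives, after replacing $Y$ by $\overline JY$, that $\overline J$ is self-adjoint for $\overline g$, i.e. $\overline g(\overline JX,Y)=\overline g(X,\overline JY)$. By \thmref{Theorem 3.1} the bundle $S(TM)$ is holomorphic, so $\overline J$ restricts to a $g$-self-adjoint bijection of the non-degenerate bundle $S(TM)$. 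Finally, because $A_N$ is $\Gamma(S(TM))$-valued with $A_N\xi=0$ and $S(TM)\perp TM^\bot$, the self-conjugacy relation $g(A_NX,Y)=g(X,A_NY)$ reduces, after projecting $X,Y$ onto $S(TM)$, to the same identity for sections of $S(TM)$ alone; so both sides of the equivalence may be tested on $\Gamma(S(TM))$.

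The decisive step is to extract from (\ref{4.2}) the identity
\begin{equation*}
g(A_NW_1,\overline JW_2)=g(\overline JW_1,A_NW_2),\qquad W_1,W_2\in\Gamma(S(TM)).
\end{equation*}
To obtain it I would start from the fact (recalled just before (\ref{2.7}), or equivalently from (\ref{2.6}) and the symmetry of $B$) that $A^*_\xi$ is self-conjugate, write $g(A^*_\xi W_1,W_2)=g(W_1,A^*_\xi W_2)$, substitute $A^*_\xi=-b\,\overline J(A_N\,\cdot\,)$ from (\ref{4.2}) on both sides, carry $\overline J$ across each inner product using its self-adjointness, and cancel the nonvanishing factor $-b$. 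I expect this to be the main (if brief) obstacle, since it is the point where the specific geometry actually enters — the relation (\ref{4.2}) forced by the Kaehler–Norden condition and the anti-isometry of $\overline J$; everything afterwards is formal.

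With these ingredients the biconditional is one chain of equalities read in two directions. Assuming $A_N$ self-conjugate, for $W_1,W_2\in\Gamma(S(TM))$ I compute
\begin{equation*}
g(A_N\overline JW_1,W_2)=g(\overline JW_1,A_NW_2)=g(A_NW_1,\overline JW_2)=g(\overline JA_NW_1,W_2),
\end{equation*}
using in turn self-conjugacy, the auxiliary identity, and the self-adjointness of $\overline J$; non-degeneracy of $g$ on $S(TM)$ then yields $A_N\overline JW_1=\overline JA_NW_1$. Conversely, assuming $A_N\overline J=\overline JA_N$, the same three relations (now beginning with the commutation, then $\overline J$ self-adjoint, then the auxiliary identity) give $g(A_N\overline JW_1,W_2)=g(\overline JW_1,A_NW_2)$; replacing $\overline JW_1$ by an arbitrary $V_1\in\Gamma(S(TM))$, which is legitimate because $\overline J$ is a bijection of the holomorphic bundle $S(TM)$, produces $g(A_NV_1,W_2)=g(V_1,A_NW_2)$, i.e. the self-conjugacy of $A_N$. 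I would close by invoking the reduction to $S(TM)$ noted at the outset to upgrade this to self-conjugacy on all of $\Gamma(TM)$.
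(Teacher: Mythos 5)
Your proposal is correct and follows essentially the same route as the paper: your auxiliary identity is exactly the paper's equation (\ref{4.13}) (obtained, as in the paper, from the self-conjugacy of $A^*_\xi$ and (\ref{4.2}), with $\overline J$ carried across using $\overline g(\overline JX,Y)=\overline g(X,\overline JY)$), and both directions then proceed by the same chains of equalities together with non-degeneracy of $g$ on the holomorphic bundle $S(TM)$. The only differences are cosmetic refinements on your part — the explicit reduction of self-conjugacy on $\Gamma(TM)$ to $\Gamma(S(TM))$, and invoking bijectivity of $\overline J$ on $S(TM)$ in the converse where the paper substitutes $\overline JX$ into (\ref{4.14}).
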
 
\begin{proof}
As $A^*$ is self-conjugate with respect to $g$, by using (\ref{4.2}) we have
\begin{equation}\label{4.13}
g(\overline J(A_NX),Y)=g(X,\overline J(A_NY))
\end{equation}
for any $X, Y\in \Gamma (TM)$.
Let $A_N$ be self-conjugate with respect to $g$ on $S(TM)$. Then for any $X, Y\in \Gamma (S(TM))$ we obtain
\[
g(\overline J(A_NX),Y)=g(A_NX,\overline JY)=g(X,A_N\overline JY).
\]
The last equality and (\ref{4.13}) imply $g(X,A_N\overline JY)=g(X,\overline J(A_NY))$. As $g$ is non-degenerate on
$S(TM)$ it follows that $A_N\circ \overline J=\overline J\circ A_N$. Conversely, if $A_N\circ \overline J=\overline J\circ A_N$
on $S(TM)$, by using (\ref{4.13}) we compute
\begin{equation}\label{4.14}
g(A_N\overline JX,Y)=g(X,\overline J(A_NY)), \quad X, Y \in \Gamma (S(TM)).
\end{equation}
Replacing $X$ from (\ref{4.14}) by $\overline JX$ we obtain $g(A_NX,Y)=g(X,A_NY)$, i.e. $A_N$ is self-conjugate with respect to $g$.
\end{proof}
Further, using well known results for lightlike hypersurfaces from \cite{D-B}, \cite{D-S} and the ones obtained in this section, 
we will give some geometrical characterizations of the considered hypersurfaces.
\par
An immediate consequence from (\cite{D-B}, Theorem 2.3, p. 89) and \thmref{Theorem 4.1} is the following
\begin{cor}\label{Corollary 4.1} Let $(M,g,S(TM))$ be a radical transversal lightlike hypersurface of a Kaehler
manifold with Norden metric $(\overline M,\overline J,\overline g)$. Then the following assertions are equivalent:
\begin{enumerate}
\renewcommand{\labelenumi}{(\roman{enumi})}
\item $S(TM)$ is an integrable distribution.
\item $h^*(X,Y)=h^*(Y,X), \quad \forall X, Y \in \Gamma (S(TM))$.
\item $A_V\circ (\overline JX)=\overline J\circ (A_VX), \quad \forall X\in \Gamma (S(TM)), \quad \forall V\in
\Gamma ({\rm tr}(TM))$.
\end{enumerate}
\end{cor}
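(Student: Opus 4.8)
The plan is to string together two equivalences that are already at our disposal, so that the corollary really is immediate. The first is the cited theorem (\cite{D-B}, Theorem~2.3, p.~89), valid for any lightlike hypersurface, which asserts that the integrability of $S(TM)$, the symmetry of $h^*$ on $\Gamma(S(TM))$, and the self-conjugacy of $A_N$ with respect to $g$ on $\Gamma(S(TM))$ are mutually equivalent. I would recall why this holds: for $X,Y\in\Gamma(S(TM))$ the screen Gauss formula (\ref{2.5}) gives $\nabla_XY=\nabla^*_XY+C(X,Y)\xi$, whence $[X,Y]=(\nabla^*_XY-\nabla^*_YX)+(C(X,Y)-C(Y,X))\xi$; since the first term lies in $\Gamma(S(TM))$, the bracket is tangent to $S(TM)$ precisely when $C$ is symmetric, i.e.\ when $h^*$ is symmetric, and (\ref{2.6}) turns this into the self-conjugacy of $A_N$. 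Thus (i) and (ii) are each equivalent to $A_N$ being self-conjugate on $\Gamma(S(TM))$.

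The second equivalence is \thmref{Theorem 4.1}, established above in this section for a radical transversal lightlike hypersurface of a Kaehler manifold with Norden metric, which says that $A_N$ is self-conjugate on $\Gamma(S(TM))$ if and only if $A_N$ commutes with $\overline J$ on $\Gamma(S(TM))$. Composing this with the previous step already shows that (i) and (ii) are each equivalent to $A_N\circ\overline J=\overline J\circ A_N$ on $\Gamma(S(TM))$.

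It then remains only to identify this last condition with (iii), and this is the one point I would treat carefully. Because $M$ is a hypersurface, ${\rm tr}(TM)$ has rank one and $N$ frames it locally, so every $V\in\Gamma({\rm tr}(TM))$ is of the form $V=fN$ with $f\in{\F}(M)$; from the Weingarten formula (\ref{2.3}) one reads off that $V\mapsto A_V$ is ${\F}(M)$-linear, i.e.\ $A_{fN}=fA_N$. Since $\overline J$ is a tensor field, the identity $A_V\circ\overline J=\overline J\circ A_V$ on $\Gamma(S(TM))$ becomes $f\,(A_N\circ\overline J-\overline J\circ A_N)=0$ there; taking $f\equiv 1$ (that is, $V=N$) recovers the commutation of $A_N$ with $\overline J$, while conversely that commutation forces the identity for every $V$. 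Hence (iii) is equivalent to $A_N\circ\overline J=\overline J\circ A_N$ on $\Gamma(S(TM))$, and the cycle of equivalences closes. The only genuine subtlety is this reduction of the quantifier over all $V$ to the single section $N$; the remaining implications are direct citations of the two theorems.
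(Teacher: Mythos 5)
Your proposal is correct and takes essentially the same route as the paper, which states the corollary as an immediate consequence of (\cite{D-B}, Theorem~2.3, p.~89) and Theorem~4.1 -- precisely the two results you chain together. The extra details you supply (re-deriving the cited theorem from the screen Gauss formula and reducing the quantifier over $V\in\Gamma({\rm tr}(TM))$ to the single section $N$ via $A_{fN}=fA_N$) are routine verifications the paper leaves implicit, and both are carried out correctly.
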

The Ricci tensor of the lightlike hypersurface $(M,g,S(TM))$ was defined in (\cite{D-B}, p. 95) by
$Ric (X,Y)={\rm trace}\{Z\longrightarrow R(X,Z)Y\}, \forall X, Y \in \Gamma (TM)$, where $R$ is the curvature tensor of $\nabla $.
In general, Ricci tensor of $M$ is not symmetric because the induced connection $\nabla $ is not a metric connection. According
to (\cite{D-B}, Theorem 3.2, p. 99) a necessary and sufficient condition the Ricci tensor of the induced connection $\nabla $
to be symmetric is each 1-form $\tau $ induced by $S(TM)$ to be closed, i.e. $d\tau =0$ on $M$. Now, taking into account
(\ref{4.3}) we state
\begin{prop}\label{Proposition 4.1}
Tne Ricci tensor of the induced connection $\nabla $ on a radical transversal lightlike hypersurface $(M,g,S(TM))$ of a
Kaehler manifold with Norden metric $(\overline M,\overline J,\overline g)$ is symmetric.
\end{prop}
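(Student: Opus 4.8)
The plan is to invoke the criterion from \cite{D-B} (Theorem 3.2, p. 99), already quoted in the text immediately preceding the statement: the Ricci tensor of the induced connection $\nabla$ on a lightlike hypersurface is symmetric if and only if each $1$-form $\tau$ induced by a screen distribution $S(TM)$ is closed, i.e.\ $d\tau = 0$ on $M$. Thus the entire proof reduces to verifying that $d\tau = 0$ in our setting, and the key input we already have is formula (\ref{4.3}), namely $\tau(X) = -\frac{1}{2b}(X\circ b)$ for all $X \in \Gamma(TM)$, valid because $\overline M$ is Kaehler with Norden metric.

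The main step is therefore to recognize that $\tau$ is (up to sign) an exact $1$-form and hence automatically closed. First I would observe that $-\frac{1}{2b}(X\circ b) = -\frac12\bigl(X \circ \ln|b|\bigr)$, since $X \circ \ln|b| = \frac{1}{b}(X\circ b)$; here $b$ is a nowhere-vanishing smooth function (established in the proof of \thmref{Theorem 3.1}, where $b \neq 0$ follows from $TM^\bot \cap \overline J(TM^\bot) = \{0\}$), so $\ln|b|$ is a well-defined smooth function on the relevant coordinate neighbourhood. Consequently $\tau = -\tfrac12\, d(\ln|b|)$ as $1$-forms. Being (minus one-half of) the differential of a smooth function, $\tau$ is exact, and therefore $d\tau = -\tfrac12\, d\bigl(d(\ln|b|)\bigr) = 0$ by $d^2 = 0$. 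Applying the cited criterion then yields that the Ricci tensor is symmetric.

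I do not expect any genuine obstacle here; the result is essentially a corollary of (\ref{4.3}) combined with the quoted theorem of Duggal--Bejancu. The only point that requires a moment's care is the \emph{local} nature of the statement: formula (\ref{4.3}) and the whole apparatus of (\ref{2.8}) are stated on a coordinate neighbourhood $U \subset M$, and $\tau$ a priori depends on the choice of screen distribution and of the section $\xi \in \Gamma(\mathrm{Rad}\,TM_{|U})$. However, by \thmref{Theorem 4.1} the screen distribution is unique up to a semi-orthogonal transformation, so there is no ambiguity in $\tau$ arising from the screen; and the criterion of \cite{D-B} only requires closedness of the $\tau$ induced by \emph{some} (equivalently here, \emph{the}) screen distribution, which is a local condition that we have just verified on each such $U$. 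Hence the symmetry of $\operatorname{Ric}$ holds globally on $M$, completing the argument.
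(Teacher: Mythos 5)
Your proposal is correct and takes essentially the same route as the paper, which proves the proposition simply by "taking into account (\ref{4.3})" together with the quoted Duggal--Bejancu criterion: since $\tau(X)=-\frac{1}{2b}(X\circ b)$ exhibits $\tau$ as the exact form $-\tfrac{1}{2}\,d(\ln|b|)$, it is closed, and symmetry of the Ricci tensor follows. Your extra remarks on the local nature of (\ref{4.3}) and the uniqueness of the screen distribution merely make explicit what the paper leaves implicit.
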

\begin{thm}\label{Theorem 4.2}
Let $(M,g,S(TM))$ be a radical transversal lightlike hypersurface of a Kaehler manifold with Norden metric 
$(\overline M,\overline J,\overline g)$. Then the following assertions are equivalent:
\begin{enumerate}
\renewcommand{\labelenumi}{(\roman{enumi})}
\item $M$ is totally geodesic.
\item $S(TM)$ is totally geodesic.
\item $(\nabla _X\eta )Y=\eta (Y)\tau (X)=-\displaystyle{\frac{1}{2b}\eta (Y)(X\circ b)}, \quad \forall X, Y 
\in \Gamma (TM)$.
\end{enumerate}
\end{thm}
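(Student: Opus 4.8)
The plan is to reduce the three conditions to the vanishing of the two local second fundamental forms and then read everything off the structure equations already derived in this section. Recall from \cite{D-B} that $(M,g,S(TM))$ is totally geodesic if and only if the local second fundamental form $B$ vanishes identically on $\Gamma(TM)$, and that the screen distribution $S(TM)$ is totally geodesic if and only if the local screen fundamental form $C$ vanishes identically on $\Gamma(TM)\times\Gamma(S(TM))$. Thus it suffices to establish the chain $B\equiv 0\iff C\equiv 0\iff\text{(iii)}$, and no new curvature computation is needed.

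For the equivalence of (i) and (ii) I would use (\ref{4.7}), which reads $C(X,\overline J(PY))=-\frac{1}{b}B(X,Y)$. The point is that, by \thmref{Theorem 3.1}, $S(TM)$ is holomorphic, so $\overline J$ restricts to an automorphism of $S(TM)$; since $\overline J^{2}=-\,\mathrm{id}$, every $W\in\Gamma(S(TM))$ can be written as $W=\overline J\bigl(P(-\overline J W)\bigr)$ with $-\overline J W\in\Gamma(S(TM))$. Hence, as $Y$ ranges over $\Gamma(TM)$, the section $\overline J(PY)$ sweeps out all of $\Gamma(S(TM))$. Consequently $B\equiv 0$ forces $C(X,W)=0$ for every $W\in\Gamma(S(TM))$, i.e. $C\equiv 0$; conversely, if $C\equiv 0$ then $\overline J(PY)\in\Gamma(S(TM))$ gives $B(X,Y)=-b\,C(X,\overline J(PY))=0$. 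This yields (i)$\iff$(ii).

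For the equivalence of (ii) and (iii) I would read off (\ref{4.10}), namely $C(X,PY)=-\frac{1}{2b}\eta(Y)(X\circ b)-(\nabla_X\eta)Y$; thus $C\equiv 0$ is precisely the assertion $(\nabla_X\eta)Y=-\frac{1}{2b}\eta(Y)(X\circ b)$ for all $X,Y\in\Gamma(TM)$, and substituting (\ref{4.3}), $\tau(X)=-\frac{1}{2b}(X\circ b)$, rewrites the right-hand side as $\eta(Y)\tau(X)$, which is exactly (iii). I do not expect a genuine obstacle: the three structure equations (\ref{4.3}), (\ref{4.7}), (\ref{4.10}) do all the work and the argument is essentially bookkeeping. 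The only point requiring care is the surjectivity of $Y\mapsto\overline J(PY)$ onto $\Gamma(S(TM))$, which is where the holomorphy of the screen distribution enters and which is what makes this clean equivalence possible, in contrast to the general lightlike setting.
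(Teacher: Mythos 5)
Your proposal is correct and follows essentially the same route as the paper's proof: reduce (i) and (ii) to $B\equiv 0$ and $C\equiv 0$ via the cited criteria from \cite{D-B}, then conclude from (\ref{4.7}), (\ref{4.10}) and (\ref{4.3}). The only difference is that you spell out the surjectivity of $Y\mapsto\overline J(PY)$ onto $\Gamma(S(TM))$, a point the paper leaves implicit when invoking (\ref{4.7}).
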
 
\begin{proof}
According to (\cite{D-B}, Theorem 2.2, p. 88) we have that $M$ is totally geodesic if and only if $h$ vanishes identically on $M$.
From $h(X,Y)=B(X,Y)N, \, X, Y\in \Gamma (TM)$ it follows that $(i)$ is equivalent to $B(X,Y)=0$. 
The screen distribution $S(TM)$ is totally geodesic (\cite{D-B}, p. 110) if and only if $C(X,PY)=0, \, 
\forall X, Y\in \Gamma (TM)$. Then the equivalence of ({\rm i}) and ({\rm ii}) we obtain from (\ref{4.7}). The equality 
(\ref{4.10}) implies the equivalence of ({\rm ii}) and ({\rm iii}).
\end{proof}
\begin{thm}\label{Theorem 4.3}
Let $(M,g,S(TM))$ be a radical transversal lightlike hypersurface of a Kaehler manifold with Norden metric 
$(\overline M,\overline J,\overline g)$. Then
\begin{enumerate}
\renewcommand{\labelenumi}{(\roman{enumi})}
\item $M$ is totally umbilical iff $A_N(PX)=\displaystyle{\frac{\rho }{b}\overline J(PX)}, \, \forall X\in \Gamma (TM)$ and 
$\rho \in {\F}(M)$.
\item $S(TM)$ is totally umbilical iff $A^*_\xi (PX)=-bk\overline J(PX), \, \forall X\in \Gamma (TM)$ and $k\in {\F}(M)$.
\end{enumerate}
\end{thm}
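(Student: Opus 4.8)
The plan is to reduce both equivalences to statements about the shape operators $A^*_\xi$ and $A_N$ and then to pass between these two operators using the Kaehler relation (\ref{4.2}), $A^*_\xi X=-b\overline J(A_NX)$. Recall from \cite{D-B} that $M$ is totally umbilical exactly when its local second fundamental form is proportional to the induced metric, i.e. $B(X,Y)=\rho\,g(X,Y)$ for some $\rho\in{\F}(M)$ and all $X,Y\in\Gamma(TM)$, while $S(TM)$ is totally umbilical exactly when $C(X,PY)=k\,g(X,PY)$ for some $k\in{\F}(M)$. The two main ingredients are relations (\ref{2.6}), $B(X,Y)=g(A^*_\xi X,Y)$ and $C(X,PY)=g(A_NX,PY)$, together with the self-conjugacy of $A^*_\xi$ and the non-degeneracy of $g$ on $S(TM)$; throughout I use that $A_N\xi=0$ and $A^*_\xi\xi=0$ by (\ref{2.4}) and (\ref{2.7}), so that $A_NX=A_N(PX)$ and $A^*_\xi X=A^*_\xi(PX)$, and that both $A_N$ and $A^*_\xi$ are $\Gamma(S(TM))$-valued.

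For part (i), I would first observe that $M$ totally umbilical is equivalent, via $B(X,Y)=g(A^*_\xi X,Y)$, to $g(A^*_\xi(PX),Y)=\rho\,g(X,Y)$ for all $X,Y$. Testing this against $Y=\xi$ gives $0=0$, and testing against $Y\in\Gamma(S(TM))$ together with the non-degeneracy of $g$ on $S(TM)$ yields the equivalent statement $A^*_\xi(PX)=\rho\,PX$. I would then substitute the Kaehler identity (\ref{4.2}) to get $\rho\,PX=-b\,\overline J(A_N(PX))$, and apply $\overline J$ to both sides; since $\overline J^2=-{\rm id}$ and $S(TM)$ is holomorphic by \thmref{Theorem 3.1} (so $\overline J(A_N(PX))\in\Gamma(S(TM))$), this becomes $b\,A_N(PX)=\rho\,\overline J(PX)$, i.e. $A_N(PX)=\frac{\rho}{b}\overline J(PX)$. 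Every step is an equivalence, so the converse direction is automatic.

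Part (ii) is the same argument with the other second fundamental form. Using $C(X,PY)=g(A_NX,PY)$ from (\ref{2.6}), the condition that $S(TM)$ be totally umbilical reads $g(A_N(PX),PY)=k\,g(PX,PY)$ for all $PY\in\Gamma(S(TM))$, which by non-degeneracy of $g$ on $S(TM)$ is equivalent to $A_N(PX)=k\,PX$. Feeding this directly into (\ref{4.2}) gives $A^*_\xi(PX)=-b\,\overline J(A_N(PX))=-bk\,\overline J(PX)$, and reversing the implications recovers the umbilicity of $S(TM)$ from the stated formula.

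I do not expect a serious obstacle here, since once (\ref{4.2}) is available the computation is formal. The only points requiring care are bookkeeping ones: suppressing the $\xi$-component correctly (using $A_N\xi=0=A^*_\xi\xi$ so that the shape operators see only $PX$), checking that testing the bilinear identities against $\xi$ produces no extra constraint because $\xi$ is orthogonal to all of $TM$, and invoking non-degeneracy of $g$ \emph{on the screen} (rather than on $TM$, where it degenerates) to cancel the test vector. The sign bookkeeping from $\overline J^2=-{\rm id}$ when applying $\overline J$ in part (i) is the one place a slip would change the final constant, so I would verify that step explicitly.
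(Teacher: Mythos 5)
Your proof is correct and follows essentially the same route as the paper: both reduce total umbilicity of $M$ and of $S(TM)$ to the shape-operator characterizations $A^*_\xi (PX)=\rho PX$ and $A_NX=kPX$, and then pass between $A^*_\xi$ and $A_N$ via the Kaehler relation (\ref{4.2}) in its form (\ref{4.15}), using $\overline J^2=-{\rm id}$. The only difference is cosmetic: the paper simply cites these characterizations from Duggal--Bejancu, whereas you re-derive them from $B=\rho\,g$ and $C=k\,g$ via (\ref{2.6}) and the non-degeneracy of $g$ on the screen, which is a sound (if slightly longer) way to obtain the same intermediate facts.
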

\begin{proof}
As $A^*_\xi \xi =A_N\xi =0$, the equality (\ref{4.2}) is equivalent to 
\begin{equation}\label{4.15}
A^*_\xi (PX)=-b\overline J(A_N(PX)), \, \forall X\in \Gamma (TM).
\end{equation}
According to (\cite{D-B}, p. 107, p. 110), $M$ and $S(TM)$ are totally umbilical if and only if, on each $U$ of $M$ there
exists a smooth functions $\rho $ and $k$ such that 
\begin{equation}\label{4.16}
A^*_\xi (PX)=\rho PX
\end{equation}
and
\begin{equation}\label{4.17}
A_NX=kPX,
\end{equation}
for any $X\in \Gamma (TM)$, respectively. By using (\ref{4.15}), (\ref{4.16}) and (\ref{4.15}), (\ref{4.17}) we establish
the truth of the assertions ({\rm i}) and ({\rm ii}), respectively.
\end{proof}

\section{Hypersurfaces of an almost complex manifold with Norden metric which are non-degenerate with respect to the one Norden
metric and lightlike with respect to the other one}\label{sec-5}
Let $(\overline M,\overline J,\overline g,\overline {\widetilde g})$ be a $2n$-dimensional almost complex manifold with Norden 
metric and $M$ be a $(2n-1)$-dimensional hypersurface of $\overline M$. An essential difference between an indefinite almost
Hermitian manifold and an almost complex manifold with Norden metric is that there exist two Norden metrics $\overline g$ and
$\overline {\widetilde g}$ on the manifold of the second type. Hence, we can consider two induced metrics $g$ and $\widetilde g$ 
on $M$ by $\overline g$ and $\overline {\widetilde g}$, respectively. In \cite{GN} we have studied submanifolds of an almost
complex manifold with Norden metric which are non-degenerate with respect to the one Norden metric and lightlike with respect to the other one. Our aim in this section is to show how the hypersurfaces $(M,g)$ and $(M,\widetilde g)$ of $\overline M$ are related. We
note that $TM=\bigcup \limits_{x \in M} T_xM$ is the tangent bundle of both $(M,g)$ and $(M,\widetilde g)$. We will denote: the normal bundle of $(M,g)$ and $(M,\widetilde g)$ by $TM^\bot $ and $TM^{\widetilde \bot }$, respectively; an orthogonal direct sum with 
respect to $\overline g$ (resp. $\overline {\widetilde g}$) by $\bot $ (resp. $\widetilde \bot $) and a non-orthogonal direct 
sum by $\oplus $ (resp. $\widetilde \oplus $). 
\par
We consider a non-degenerate hypersurface $(M,g)$ of $\overline M$ defined by the following conditions
\begin{equation}\label{4.18}
\overline g(\overline N,\overline N)=\epsilon , \, \, \epsilon =\pm 1;\quad \overline g(\overline N,\overline J\overline N)=0,
\end{equation}
where $\overline N$ is the normal vector field to $M$. In the case when $\overline N$ is a time-like unit to $M$\,
$(\epsilon =-1)$, the hypersurface $(M,g)$ was called in (\cite{GaMGri}, \cite{MM}) {\it an isotropic hypersurface regarding
the associated metric} $\overline {\widetilde g}$ of $\overline M$. 
\begin{thm}\label{Theorem 5.1}
Let $(\overline M,\overline J,\overline g,\overline {\widetilde g})$ be an almost complex manifold with Norden 
metric and $M$ be a hypersurface of $\overline M$. $(M,g)$ is a non-degenerate hypersurface defined by (\ref{4.18})
iff $(M,\widetilde g)$ is a radical transversal lightlike hypersurface.
\end{thm}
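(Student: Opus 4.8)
The plan is to push everything through the single ambient metric $\overline g$, exploiting two algebraic facts: $\overline J$ is self-adjoint for $\overline g$ (replace $Y$ by $\overline JY$ in $\overline g(\overline JX,\overline JY)=-\overline g(X,Y)$), and consequently $\overline{\widetilde g}(X,Y)=\overline g(\overline JX,Y)$ is symmetric. First I would record the two governing identities. If $\overline N$ denotes a local $\overline g$-normal field to $M$, then a vector $v$ is $\overline{\widetilde g}$-orthogonal to $TM$ iff $\overline Jv$ is $\overline g$-orthogonal to $TM$; since $TM^\bot=\mathrm{span}\{\overline N\}$ this gives $TM^{\widetilde\bot}=\mathrm{span}\{\overline J\overline N\}$. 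Second, $\overline{\widetilde g}(\overline J\overline N,\overline J\overline N)=-\overline g(\overline N,\overline J\overline N)$. This second identity is the crux: for a hypersurface, $(M,\widetilde g)$ is lightlike exactly when its $\overline{\widetilde g}$-normal is null, i.e. exactly when $\overline g(\overline N,\overline J\overline N)=0$, which is the second equation of (\ref{4.18}). Everything else is identifying the radical, screen and transversal pieces.

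For the direction ($\Rightarrow$) I would assume (\ref{4.18}) and set $\widetilde\xi=\overline J\overline N$. Then $\overline g(\overline N,\widetilde\xi)=\overline g(\overline N,\overline J\overline N)=0$ makes $\widetilde\xi$ tangent to $M$, and being $\overline{\widetilde g}$-null and $\overline{\widetilde g}$-orthogonal to $TM$ it generates $\mathrm{Rad}(TM)$ for $\widetilde g$, so $(M,\widetilde g)$ is lightlike. I would then take $S=\{X\in TM:\overline g(X,\overline J\overline N)=0\}$, check it is the $\overline{\widetilde g}$-orthogonal complement of $\mathrm{span}\{\overline N,\overline J\overline N\}$ (whose Gram determinant is $-\epsilon^2=-1$, hence nondegenerate), so that $S$ is a nondegenerate complement of $\mathrm{Rad}(TM)$ in $TM$, and verify $\overline JS=S$ directly from self-adjointness of $\overline J$. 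Thus $S$ is a holomorphic screen, and \thmref{Theorem 3.1} immediately gives that $(M,\widetilde g)$ is radical transversal; concretely $\overline J\widetilde\xi=-\overline N$ spans $\mathrm{tr}(TM)$, so $\overline J(TM^{\widetilde\bot})=\mathrm{tr}(TM)$.

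For the converse ($\Leftarrow$) I would assume $(M,\widetilde g)$ is radical transversal lightlike with null normal $\widetilde\xi$, so by \defnref{Definition 3.1} $\overline J\widetilde\xi=\widetilde b\,\widetilde N$ with $\widetilde b\neq0$ and $\widetilde N$ the transversal null section normalized by (\ref{2.2}) for $\overline{\widetilde g}$. Pairing this against $\widetilde\xi$ under $\overline{\widetilde g}$ and using $\overline{\widetilde g}(\widetilde N,\widetilde\xi)=1$ yields $\widetilde b=-\overline g(\widetilde\xi,\widetilde\xi)$, while lightlikeness gives $\overline g(\overline J\widetilde\xi,\widetilde\xi)=\overline{\widetilde g}(\widetilde\xi,\widetilde\xi)=0$. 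Since $\overline J\widetilde\xi$ spans $TM^\bot$ and $\overline g(\overline J\widetilde\xi,\overline J\widetilde\xi)=-\overline g(\widetilde\xi,\widetilde\xi)=\widetilde b\neq0$, the $\overline g$-normal is non-null and $(M,g)$ is nondegenerate. Taking $\overline N$ to be the unit normal proportional to $\overline J\widetilde\xi$ gives $\overline g(\overline N,\overline N)=\epsilon=\pm1$, and $\overline g(\overline N,\overline J\overline N)$ is proportional to $\overline g(\overline J\widetilde\xi,\widetilde\xi)=0$; this is exactly (\ref{4.18}).

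The step I expect to carry the real content, rather than bookkeeping, is the nondegeneracy claim in the converse: lightlikeness of $(M,\widetilde g)$ by itself does not force the $\overline g$-normal to be non-null, and it is precisely the radical transversal hypothesis --- encoded in $\widetilde b\neq0$, equivalently $\overline g(\widetilde\xi,\widetilde\xi)\neq0$ --- that excludes the degenerate-degenerate situation $\overline J(TM^{\widetilde\bot})\subset TM$ and upgrades $(M,\widetilde g)$ lightlike to $(M,g)$ nondegenerate. The remaining care is to confirm that $\overline N$ (resp. $\overline J\widetilde\xi$) genuinely satisfies the normalization (\ref{2.2}) for the transversal bundle of the opposite metric, which is what makes the two screen/transversal decompositions correspond.
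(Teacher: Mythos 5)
Your proposal is correct, and the forward direction is essentially the paper's own argument: you construct the same screen (the $\overline g$-orthogonal complement of $\mathrm{span}\{\overline J\overline N\}$ in $TM$, the paper's $D$), check it is holomorphic, and invoke \thmref{Theorem 3.1}; the only deviation is that you get nondegeneracy of this screen from the Gram determinant of the $\overline {\widetilde g}$-hyperbolic plane $\mathrm{span}\{\overline N,\overline J\overline N\}$, where the paper instead cites Lemma 3.1 of \cite{GN} --- your version is self-contained. In the converse your route is genuinely leaner: the paper first shows, using holomorphy of $S(TM)$ via \thmref{Theorem 3.1}, that $S(TM)$, $TM^{\widetilde \bot }$ and ${\rm tr}(TM)$ are mutually $\overline g$-orthogonal, derives the splitting (\ref{4.19}), and only then identifies $TM^\bot ={\rm tr}(TM)$ and concludes nondegeneracy; you bypass all of this by computing directly that $TM^\bot =\mathrm{span}\{\overline J\widetilde \xi \}$ with $\overline g(\overline J\widetilde \xi ,\overline J\widetilde \xi )=-\overline g(\widetilde \xi ,\widetilde \xi )=\widetilde b\neq 0$, where your relation $\widetilde b=-\overline g(\widetilde \xi ,\widetilde \xi )$ is exactly the paper's implicit $\overline g(\xi ,\xi )=-b$, and your rescaling of $\overline J\widetilde \xi $ to a unit normal matches the paper's $\overline N=\pm \sqrt{\pm b}\,N$. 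What the paper's longer converse buys is the explicit decomposition (\ref{4.19}) and the identification ${\rm tr}(TM)=TM^\bot $, which are reused throughout the rest of Section 5; what your shortcut buys is a clean isolation of where the radical transversal hypothesis actually enters --- only through $\widetilde b\neq 0$, your converse never needing holomorphy of the screen, since the second condition of (\ref{4.18}) is automatic: $\overline g(\overline N,\overline J\overline N)\propto \overline {\widetilde g}(\widetilde \xi ,\widetilde \xi )=0$ holds for any lightlike $(M,\widetilde g)$. No gaps: tangency and nullity of $\overline J\overline N$, the identity $TM^{\widetilde \bot }=\overline J(TM^\bot )$ (which the paper proves by double inclusion and you get in one line from self-adjointness of $\overline J$), and the normalization bookkeeping via (\ref{2.2}) are all correctly handled.
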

\begin{proof}
Let $(M,g)$ be a non-degenerate hypersurface of $\overline M$ with normal vector field $\overline N$ satisfying (\ref{4.18}).
From $g(\overline N,\overline J\overline N)=0$ it follows $\overline J\overline N\in \Gamma (TM)$, i.e. $\overline J$ 
transforms the normal bundle $TM^\bot $ of $(M,g)$ in $TM$ and $\{\overline J\overline N\}$ is a basis of 
$\overline J(TM^\bot )$. Take $X\in \Gamma (TM)$ and $V=\lambda \overline J\overline N\in \Gamma (\overline J(TM^\bot ))$, \,  
$\lambda \in {\F}(M)$ we compute $\overline {\widetilde g}(X,V)=\lambda \overline g(\overline JX,\overline J\overline N)=
-\lambda \overline g(X,\overline N)=0$. The last equality implies $V$ belongs to the normal bundle $TM^{\widetilde \bot }$ of
$(M,\widetilde g)$ and consequently $\overline J(TM^\bot )\subseteq TM^{\widetilde \bot }$. Now, if 
$U\in \Gamma (TM^{\widetilde \bot })$ we have $\overline {\widetilde g}(X,U)=0$ for any $X\in \Gamma (TM)$, which is equivalent
to $\overline g(X,\overline JU)=0, \, \forall X\in \Gamma (TM)$. Hence, $\overline JU\in \Gamma (TM^\bot )$ which implies 
$TM^{\widetilde \bot }\subseteq \overline J(TM^\bot )$. So, we obtain that $TM^{\widetilde \bot }=\overline J(TM^\bot )$. As
$\overline J(TM)$ is an 1-dimensional subbundle of $TM$ it follows $TM\cap TM^{\widetilde \bot }=TM^{\widetilde \bot }=
{\rm Rad} TM$, i.e. $(M,\widetilde g)$ is a lightlike hypersurface of $\overline M$. Because of $\overline J(TM^\bot )$ is
a non-degenerate subbundle of $TM$ with respect to $\overline g$, we put $TM=\overline J(TM^\bot )\bot D$, where $D$ is
the complementary orthogonal with respect to $\overline g$ vector subbundle of $\overline J(TM^\bot )$ in $TM$. Take 
$X\in \Gamma (D)$ we compute $\overline g(\overline JX,\overline N)=\overline g(X,\overline J\overline N)=0$ which means that 
$\overline JX\in \Gamma (TM)$. Moreover, $\overline g(\overline JX,\overline J\overline N)=0$ and consequently 
$\overline JX\in \Gamma (D)$. So, we establish that $D$ is holomorphic by the action of $\overline J$. Then from (\cite{GN}, 
Lemma 3.1) it follows that both metrics $g$ and $\widetilde g$ are non-degenerate on $D$. We also have 
$\overline {\widetilde g}(X,\overline J\overline N)=\overline g(\overline J\overline X,\overline J\overline N)=0$ for any
$X\in \Gamma (D)$ which means $D\widetilde \bot TM^{\widetilde \bot}$. Thus, we conclude the vector bundle $D$ is a screen
distribution of $(M,\widetilde g)$. As $D$ is holomorphic from \thmref{Theorem 3.1} it follows that $(M,\widetilde g)$ is a
radical transversal lightlike hypersurface of $\overline M$ and ${\rm tr}(TM)=TM^\bot $. We note that for any 
$\xi \in \Gamma (TM^{\widetilde \bot})$ and $N\in \Gamma ({\rm tr}(TM))$ we have $\xi =\lambda \overline J\overline N$ and
$N=\mu \overline N$, where $\lambda , \mu \in {\F}(\overline M)$. The pair $\{\xi ,N\}$ on $(M,\widetilde g)$ satisfies
the conditions $\overline {\widetilde g}(\xi ,\xi)=\overline {\widetilde g}(N,N)=\overline {\widetilde g}(W,N)=0, \, 
\forall W\in \Gamma (S(TM))$. In the case $\overline N$ is space-like (resp. time-like), the condition 
$\overline {\widetilde g}(\xi ,N)=1$ is fulfilled by $\lambda \mu =-1$  (resp. $\lambda \mu =1 $). Conversely, let 
$(M,\widetilde g,S(TM))$ be a radical transversal lightlike hypersurface of $\overline M$. Hence, we have 
$\overline J(TM^{\widetilde \bot})={\rm tr}(TM)$ and according to \thmref{Theorem 3.1} $\overline JS(TM)=S(TM)$. For any
$X\in \Gamma (S(TM))$, $\xi \in \Gamma (TM^{\widetilde \bot})$ and $N\in \Gamma ({\rm tr}(TM))$ we get
\begin{equation*}
\begin{array}{ll}
\overline g(X,N)=-\overline {\widetilde g}(\overline JX,N)=0, \qquad \overline g(\xi ,N)=
-\overline {\widetilde g}(\overline J\xi ,N)=0, \\
 \overline g(X,\xi )=-\overline {\widetilde g}(X,\overline J\xi )=0.
\end{array}
\end{equation*}
The above three equalities imply the vector bundles $S(TM)$, $TM^{\widetilde \bot}$ and ${\rm tr}(TM)$ are mutually orthogonal
with respect to $\overline g$. Then the following decomposition of $T\overline M$ is valid
\begin{equation}\label{4.19}
T\overline M=S(TM)\bot TM^{\widetilde \bot}\bot {\rm tr}(TM)=TM\bot {\rm tr}(TM).
\end{equation}
From (\ref{4.19}) it follows that the normal bundle $TM^\bot $ of the hypersurface $(M,g)$ coincides with the transversal vector
bundle ${\rm tr}(TM)$ of $(M,\widetilde g)$ and both $TM$ and $TM^\bot $ are non-degenerate with respect to $\overline g$ which
means that $(M,g)$ is a non-degenerate hypersurface of $\overline M$. Now, let $\{\xi ,N\}$ be a pair of sections
on $(M,\widetilde g)$
satisfying the conditions (\ref{2.2}) and $\overline J\xi =bN$, \, $b\in {\F}(\overline M)$. In the case $b>0$ (resp. $b<0$)
the vector field $\overline N=\pm \sqrt{b}N$ (resp. $\overline N=\pm \sqrt{-b}N$) is a space-like (resp. time-like) normal
unit to $(M,g)$ and $\overline g(\overline N,\overline J\overline N)=0$ in both cases, which completes the proof.
\end{proof}
An isotropic hypersurface $(M,g)$ regarding the associated metric $\overline {\widetilde g}$ of an almost complex manifold with
Norden metric $(\overline M,\overline J,\overline g,\overline {\widetilde g})$, equipped with the almost contact 
$B$-metric structure
\begin{equation}\label{4.20}
\varphi :=\overline J+\overline g(. , \overline J\overline N)\overline N, \quad \overline \xi :=-\overline J\overline N, \quad
\overline \eta :=-\overline g(. , \overline J\overline N), \quad g:=\overline g_{|M}
\end{equation}
was called in \cite{MM} {\it a hypersurface of second type} of $\overline M$. In \cite{MM} it was proved that every 
hypersurface of second type $(M,g)$ of a Kaehler manifold with Norden metric $\overline M$ is an almost contact manifold
with $B$-metric belonging to the class ${\F}_4\oplus {\F}_5\oplus {\F}_6\oplus {\F}_8$. Some classes of these hypersurfaces
were characterized by the shape operator $A$. Below we recall the characterization conditions of the following classes
\begin{equation}\label{4.21}
\begin{array}{lll}
{\F}_0 : A=0; \qquad \qquad {\F}_4 : A=\displaystyle{-\frac{\theta (\overline \xi )}{2n-2}\varphi ^2, \, \, \,
\theta (\overline \xi )
={\rm tr}A}; \\
{\F}_5 : A=\displaystyle{-\frac{\theta ^*(\overline \xi )}{2n-2}\varphi , \quad \theta ^*(\overline \xi )
={\rm tr}(A\circ \varphi )}; \\ \\
{\F}_4\oplus {\F}_5\oplus {\F}_6 : A\overline \xi =0, \quad A\circ \varphi =\varphi \circ A.
\end{array}
\end{equation}
From now on in this section $(\overline M,\overline J,\overline g,\overline {\widetilde g})$ will stand for a $2n$-dimensional
Kaehler manifold with Norden metric, $(M,\varphi ,\overline \xi ,\overline \eta ,g)$ and $(M,\widetilde g)$ - a hypersurface
of second type of $\overline M$ and its corresponding radical transversal lightlike hypersurface from \thmref{Theorem 5.1}, respectively. Follow the proof of \thmref{Theorem 5.1} and taking into account the definitions of $\overline \xi $ and
$\overline \eta $ given in (\ref{4.20}) we have ${\rm tr}(TM)=TM^\bot =span \{\overline N\}$, \, 
$TM^{\widetilde \bot }=\overline J(TM^\bot )=span \{\overline \xi \}$ and
the contact distribution ${\D}$ of $(M,\varphi ,\overline \xi ,\overline \eta ,g)$ coincides with the unique 
(according to \thmref{Theorem 4.1}) screen distribution $S(TM)$ of $(M,\widetilde g)$. By using (\ref{*}) and (\ref{2.1}) we
conclude that the tangent bundles $TM$ and $T\overline M$ can be decomposed in direct sums as follows
\begin{equation}\label{5.51}
TM={\D}\bot span \{\overline \xi \}, \qquad TM={\D}\widetilde \bot span \{\overline \xi \} 
\end{equation}
and
\begin{equation}\label{4.22}
T\overline M={\D}\bot span \{\overline \xi \}\bot span \{\overline N\} , \qquad
T\overline M={\D}\widetilde \bot \left(span \{\overline \xi \}\widetilde \oplus span \{\overline N\} \right).
\end{equation}
We note that any $X\in \Gamma (TM)$ can be written as $X=PX+\overline \eta (X)\overline \xi $, 
where $PX\in {\D}$. Hence, $\varphi X=\varphi (PX)$. On the other hand, by using (\ref{4.20}) we compute $\varphi (PX)=
\overline J(PX)$, i.e. we have
\begin{equation}\label{52}
\varphi X=\varphi (PX)=\overline J(PX), \quad \forall \, X\in \Gamma (TM).
\end{equation}
Further, we will find relations between the induced geometrical objects on the hypersurfaces $(M,\varphi ,\overline \xi ,\overline \eta ,g)$ and $(M,\widetilde g)$ of $\overline M$. Let $\overline \nabla $, $\overline {\widetilde \nabla }$ be the Levi-Civita
connections of the metrics $\overline g$, $\overline {\widetilde g}$ on $\overline M$, respectively, and $\nabla $, 
$\widetilde \nabla $ be the induced linear connections on $(M,\varphi ,\overline \xi ,\overline \eta ,g)$, $(M,\widetilde g)$,
respectively. According to \cite{MM}, the formulas of Gauss and Weingarten for $(M,\varphi ,\overline \xi ,\overline \eta ,g)$
are
\begin{equation}\label{4.23}
\begin{array}{ll}
\overline \nabla _XY=\nabla _XY-g(A_{\overline N}X,Y)\overline N, \\ 
\overline \nabla _X\overline N=-A_{\overline N}X, \qquad \qquad \qquad \forall \, X,Y\in \Gamma (TM),
\end{array}
\end{equation}
where the shape operator $A_{\overline N}$ satisfies
\begin{equation}\label{4.24}
\overline \eta (A_{\overline N}X)=0 \Longleftrightarrow A_{\overline N}\overline \xi =0.
\end{equation}
As we have shown in the proof of \thmref{Theorem 5.1}, the pair of sections $\{\xi ,N\}$ on $(M,\widetilde g)$ defined by
\begin{equation}\label{4.25}
\xi =\frac{1}{\lambda }\overline J\overline N=-\frac{1}{\lambda }\overline \xi , \quad N=\lambda \overline N, \quad 
\lambda \in {\F}(\overline M)
\end{equation}
satisfies the conditions (\ref{2.2}). By using (\ref{4.25}) the formulas of Gauss and Weingarten (\ref{2.3}) for 
$(M,\widetilde g)$ become
\begin{equation}\label{4.26}
\begin{array}{ll}
\overline {\widetilde \nabla }_XY=\widetilde \nabla _XY+\lambda B(X,Y)\overline N, \\
\overline {\widetilde \nabla }_X\overline N=-{\widetilde A}_{\overline N}X+\displaystyle{\left(\tau (X)-
\frac{1}{\lambda }(X\circ \lambda )\right)\overline N}, \qquad \forall \, X,Y\in \Gamma (TM),
\end{array}
\end{equation}
where $\widetilde A$ is the shape operator of $(M,\widetilde g)$.
Taking into account that on $\overline M$ the Levi-Civita connections $\overline \nabla $ and $\overline {\widetilde \nabla }$ coincide, the formulas (\ref{4.23}), (\ref{4.26}) and the decompositions (\ref{4.22}) we get
\begin{equation}\label{4.27}
\begin{array}{ll}
{\widetilde \nabla }_XY=\nabla _XY; \quad  \qquad B(X,Y)=\displaystyle{-\frac{1}{\lambda }g(A_{\overline N}X,Y)}; \\
{\widetilde A}_{\overline N}X=A_{\overline N}X; \quad \qquad \tau (X)=\displaystyle{\frac{1}{\lambda }(X\circ \lambda )}.
\end{array}
\end{equation}
The equality (\ref{4.24}) implies $A_{\overline N}X\in {\D}$ and having in mind (\ref{52}) we have
\begin{equation}\label{4.28}
\varphi (A_{\overline N}X)=\overline J(A_{\overline N}X), \qquad \forall \, X\in \Gamma (TM).
\end{equation}
From (\ref{4.2}), (\ref{4.10}) by using (\ref{4.25}), (\ref{4.27}) and (\ref{4.28}) we obtain
\begin{equation}\label{4.29}
A^*_{\overline \xi }X=-\varphi (A_{\overline N}X); \quad \qquad C(X,PY)=\lambda g(\varphi (A_{\overline N}X),Y).
\end{equation}
We close this section by some geometrical characterizations of both $(M,\widetilde g)$ and
$(M,\varphi ,\overline \xi ,\overline \eta ,g)$. It is well known \cite{MMDis}, if an almost contact manifold with $B$-metric belongs to the class ${\F}_1\oplus \ldots \oplus {\F}_4\oplus {\F}_5\oplus {\F}_6\oplus {\F}_9\oplus {\F}_{10}\oplus {\F}_{11}$, then the contact distribution ${\D}$ of the manifold is an integrable distribution. Hence, if we suppose that the 
hypersurface $(M,\varphi ,\overline \xi ,\overline \eta ,g)$ of $\overline M$ belongs to the class 
${\F}_4\oplus {\F}_5\oplus {\F}_6$, then its contact distribution ${\D}$ is integrable As ${\D}$ is the screen distribution
$S(TM)$ of the corresponding hypersurface $(M,\widetilde g)$ to $(M,\varphi ,\overline \xi ,\overline \eta ,g)$, we have that
$S(TM)$ is integrable, too. We will show that the converse statement is also true. Let 
$(M,\varphi ,\overline \xi ,\overline \eta ,g)$ be a hypersurface of $\overline M$ such that its contact distribution ${\D}$
is integrable, which is equivalent to the assumption the screen distribution $S(TM)$ of $(M,\widetilde g)$ is integrable.
Then \corref{Corollary 4.1} implies $\widetilde A_V(\overline J(PX))=\overline J(\widetilde A_VPX)$ for any
$X\in \Gamma (TM)$. From the last equality, by using (\ref{52}), (\ref{4.24}), (\ref{4.28}) and $\widetilde A_V=A_V$ on $TM$, we
obtain
\begin{equation}\label{4.30}
A_V(\varphi X)=\varphi (A_VX), \qquad \forall \, X\in \Gamma (TM).
\end{equation}
Taking into account (\ref{4.24}), (\ref{4.30}) and (\ref{4.21}) we conclude that $(M,\varphi ,\overline \xi ,\overline \eta ,g)$
belongs to the class ${\F}_4\oplus {\F}_5\oplus {\F}_6$. So, we establish the truth of the following
\begin{prop}\label{5.1}
The assertion $(M,\varphi ,\overline \xi ,\overline \eta ,g)$ belongs to the class ${\F}_4\oplus {\F}_5\oplus {\F}_6$ is
equivalent to each of the following assertions: 
\begin{enumerate}
\renewcommand{\labelenumi}{(\roman{enumi})}
\item The contact distribution ${\D}$ of $(M,\varphi ,\overline \xi ,\overline \eta ,g)$ is integrable.
\item The screen distribution $S(TM)$ of $(M,\widetilde g)$ is integrable.
\end{enumerate}
\end{prop}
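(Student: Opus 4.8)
The plan is to reduce the whole statement to a single biconditional. Since it has already been observed that the contact distribution ${\D}$ of $(M,\varphi,\overline\xi,\overline\eta,g)$ coincides with the unique screen distribution $S(TM)$ of the associated lightlike hypersurface $(M,\widetilde g)$, assertions (i) and (ii) are literally the same statement; hence it suffices to prove that membership of $(M,\varphi,\overline\xi,\overline\eta,g)$ in the class ${\F}_4\oplus{\F}_5\oplus{\F}_6$ is equivalent to integrability of ${\D}$.

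For the forward implication I would invoke the known classification fact (from \cite{MMDis}) that an almost contact $B$-metric manifold lying in ${\F}_1\oplus\ldots\oplus{\F}_4\oplus{\F}_5\oplus{\F}_6\oplus{\F}_9\oplus{\F}_{10}\oplus{\F}_{11}$ has an integrable contact distribution. As ${\F}_4\oplus{\F}_5\oplus{\F}_6$ is a subclass, integrability of ${\D}$ is immediate, and since ${\D}=S(TM)$ this is exactly assertion (ii) as well. So the nontrivial direction is the converse.

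For the converse I would start from integrability of $S(TM)$ and apply \corref{Corollary 4.1} to $(M,\widetilde g)$, whose condition (iii) yields the almost-complex commutation $\widetilde A_V(\overline J(PX))=\overline J(\widetilde A_V PX)$ for every $X\in\Gamma(TM)$ and $V\in\Gamma({\rm tr}(TM))$. The task is then to transport this into the contact-geometric commutation $A_V(\varphi X)=\varphi(A_V X)$ for the hypersurface of second type. Using $\varphi X=\overline J(PX)$ from (\ref{52}) on the left, the coincidence $\widetilde A_V=A_V$ on $TM$ from (\ref{4.27}), the fact that $A_{\overline N}(PX)\in{\D}$ with $\varphi(A_{\overline N}X)=\overline J(A_{\overline N}X)$ from (\ref{4.28}), and $A_{\overline N}\overline\xi=0$ together with $\varphi\overline\xi=0$ from (\ref{4.24}) to discard the $\overline\xi$-component of $X$, both sides convert term by term and the identity follows on all of $\Gamma(TM)$. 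Finally, combining the always-valid relation $A_{\overline N}\overline\xi=0$ with this derived commutation and comparing against the characterizing conditions in (\ref{4.21}) (namely $A\overline\xi=0$ and $A\circ\varphi=\varphi\circ A$) places $(M,\varphi,\overline\xi,\overline\eta,g)$ in ${\F}_4\oplus{\F}_5\oplus{\F}_6$.

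I expect the main obstacle to be precisely the bookkeeping in this converse translation: carefully matching the shape operator $\widetilde A_V$ of the lightlike side with the contact shape operator $A_{\overline N}$ via (\ref{4.27}), and confirming that along the characteristic direction $\overline\xi$ both sides of the commutation vanish so that the relation valid on ${\D}$ extends without discrepancy to the whole tangent bundle. Once that identification is clean, the classification characterization (\ref{4.21}) closes the argument and, together with ${\D}=S(TM)$, establishes the equivalence with both (i) and (ii).
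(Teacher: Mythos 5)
Your proposal is correct and follows essentially the same route as the paper's proof: you reduce (i) and (ii) to one statement via the identification ${\D}=S(TM)$, prove the forward direction by the same classification fact from \cite{MMDis}, and handle the converse exactly as the paper does, transporting \corref{Corollary 4.1}(iii) into $A_V\circ\varphi=\varphi\circ A_V$ using (\ref{52}), (\ref{4.24}), (\ref{4.27}), (\ref{4.28}) and then invoking the characterization (\ref{4.21}). Your attention to the $\overline\xi$-direction (both sides vanish there, by $A_{\overline N}\overline\xi=0$ and $\varphi\overline\xi=0$) is the same bookkeeping the paper compresses into its citations of (\ref{4.24}) and (\ref{4.28}).
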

Now, we prove
\begin{prop}\label{5.2}
The following assertions are equivalent:
\begin{enumerate}
\renewcommand{\labelenumi}{(\roman{enumi})}
\item $(M,\varphi ,\overline \xi ,\overline \eta ,g)$ is totally geodesic.
\item $(M,\widetilde g)$ is totally geodesic.
\item $(M,\varphi ,\overline \xi ,\overline \eta ,g)$ belongs to the class ${\F}_0$.
\end{enumerate}
\end{prop}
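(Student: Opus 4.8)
The plan is to prove the three-way equivalence by showing that each of the three assertions is equivalent to the single condition $A_{\overline N}=0$, where $A_{\overline N}$ is the shape operator of the non-degenerate hypersurface $(M,\varphi ,\overline \xi ,\overline \eta ,g)$ occurring in the Gauss--Weingarten formulas (\ref{4.23}). Once all three statements are funnelled through this common condition, the proposition follows at once.

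First I would establish that (i) is equivalent to $A_{\overline N}=0$. By the Gauss formula (\ref{4.23}), $\overline \nabla _XY=\nabla _XY-g(A_{\overline N}X,Y)\overline N$, so the second fundamental form of $(M,\varphi ,\overline \xi ,\overline \eta ,g)$ is $-g(A_{\overline N}X,Y)\overline N$. Hence this hypersurface is totally geodesic precisely when $g(A_{\overline N}X,Y)=0$ for all $X,Y\in \Gamma (TM)$, and since $g$ is non-degenerate on $M$ this is equivalent to $A_{\overline N}=0$.

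Next I would handle (ii). By \thmref{Theorem 4.2}, applied to the radical transversal lightlike hypersurface $(M,\widetilde g)$, being totally geodesic is equivalent to the vanishing of its local second fundamental form $B$. The transfer relation in (\ref{4.27}) gives $B(X,Y)=-\frac{1}{\lambda }g(A_{\overline N}X,Y)$, where $\lambda \in \mathcal{F}(\overline M)$ is nowhere zero, being the scaling factor from (\ref{4.25}). Therefore $B\equiv 0$ holds iff $g(A_{\overline N}X,Y)=0$ for all $X,Y$, that is, iff $A_{\overline N}=0$, which again matches the common condition.

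Finally, (iii) is immediate from the class characterizations recalled in (\ref{4.21}): the hypersurface of second type belongs to $\mathcal{F}_0$ exactly when its shape operator vanishes, i.e. $A_{\overline N}=0$. Chaining the three equivalences through $A_{\overline N}=0$ completes the argument. I do not anticipate a genuine obstacle here; the points that merely require care are the non-degeneracy of $g$ and the nonvanishing of $\lambda $, both used to pass from the bilinear identities to the operator equation $A_{\overline N}=0$, together with the identification of the shape operator $A$ appearing in (\ref{4.21}) with the operator $A_{\overline N}$ of (\ref{4.23}).
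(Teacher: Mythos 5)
Your proposal is correct and follows essentially the same route as the paper's own proof: both arguments reduce each of (i), (ii) and (iii) to the single condition $A_{\overline N}=0$, using the Gauss formula (\ref{4.23}), the relation $B(X,Y)=-\frac{1}{\lambda }g(A_{\overline N}X,Y)$ from (\ref{4.27}) together with the non-degeneracy of $g$, and the ${\F}_0$ characterization in (\ref{4.21}). Your explicit attention to the nonvanishing of $\lambda $ is a point the paper leaves implicit, but it is not a substantive difference.
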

\begin{proof}
As $(M,\varphi ,\overline \xi ,\overline \eta ,g)$ is a non-degenerate hypersurface of $\overline M$, it is totally geodesic if
and only if the shape operator $A$ vanishes identically. The hypersurface $(M,\widetilde g)$ is lightlike and it is totally
geodesic (\cite{D-B}, \cite{D-S}) if and only if $B(X,Y)=0$ for any $X,Y\in \Gamma (TM)$, which according to (\ref{4.27}) is equivalent to $g(A_{\overline N}X,Y)=0$ on $TM$. Having in mind that $g$ is non-degenerate on $M$, the last equality is 
fulfilled if and only if $A=0$. Finally, from (\ref{4.21}) we have that the characterization condition of the class ${\F}_0$ is
$A=0$. Thus, each of the assertions ({\rm i}), ({\rm ii}) and ({\rm iii}) is equivalent to the condition $A=0$, which completes the proof.
\end{proof}
\begin{prop}\label{5.3}
$(M,\widetilde g)$ is totally umbilical iff $(M,\varphi ,\overline \xi ,\overline \eta ,g)$ belongs to the class
${\F}_5$.
\end{prop}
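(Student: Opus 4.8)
The plan is to characterize both "totally umbilical" conditions in terms of the shape operator $A_{\overline N}$ of the non-degenerate hypersurface $(M,\varphi ,\overline \xi ,\overline \eta ,g)$ and then match them. First I would recall that $(M,\widetilde g)$ is totally umbilical precisely when its screen second fundamental form satisfies $A^*_\xi (PX)=\rho PX$ for some $\rho \in {\F}(M)$, which by the relation $\xi =-\frac{1}{\lambda }\overline \xi$ from (\ref{4.25}) I would rewrite in terms of $A^*_{\overline \xi }$. Using (\ref{4.29}), namely $A^*_{\overline \xi }X=-\varphi (A_{\overline N}X)$, the totally umbilical condition becomes $-\varphi (A_{\overline N}(PX))=\rho PX$ on the screen distribution ${\D}=S(TM)$.

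Next I would translate the Kaehler hypersurface-of-second-type side. By (\ref{4.21}) the class ${\F}_5$ is characterized by $A=\displaystyle{-\frac{\theta ^*(\overline \xi )}{2n-2}\varphi }$, i.e.\ the shape operator $A_{\overline N}$ is a scalar multiple of $\varphi$. Setting $c=-\frac{\theta ^*(\overline \xi )}{2n-2}$, membership in ${\F}_5$ means $A_{\overline N}X=c\,\varphi X$ for all $X\in \Gamma (TM)$. Applying $\varphi$ and using $\varphi ^2=-{\rm id}+\overline \eta \otimes \overline \xi$ together with $A_{\overline N}\overline \xi =0$ from (\ref{4.24}) and $\varphi X=\varphi (PX)=\overline J(PX)$ from (\ref{52}), I would show $-\varphi (A_{\overline N}(PX))=c\,PX$, which is exactly the totally umbilical condition above with $\rho =c$.

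The two directions then fit together cleanly. For the forward direction, if $(M,\widetilde g)$ is totally umbilical with function $\rho$, I obtain $\varphi (A_{\overline N}(PX))=-\rho PX$; applying $\varphi$ once more and using $A_{\overline N}(PX)\in {\D}$ (a consequence of (\ref{4.24}) which forces $A_{\overline N}X\in {\D}$) yields $A_{\overline N}(PX)=\rho \,\varphi (PX)=\rho \,\varphi X$, and since $A_{\overline N}\overline \xi =0=\rho \varphi \overline \xi$ this extends to all of $TM$, giving the ${\F}_5$ defining relation with $c=\rho$. The converse runs the same computation backwards. I would also identify $\rho$ with $-\frac{\theta ^*(\overline \xi )}{2n-2}$ by taking traces, confirming the scalar is the expected one.

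The main obstacle I anticipate is purely bookkeeping rather than conceptual: keeping careful track of the projection $P$ and the vector $\overline \xi$ when passing between conditions stated "for all $X\in \Gamma (TM)$" and conditions that genuinely live only on the screen/contact distribution ${\D}$. Because $A_{\overline N}$ annihilates $\overline \xi$ while $\varphi$ also annihilates $\overline \xi$, every relation automatically holds on the $\overline \xi$-component, so the real content is always on ${\D}$; the care needed is to invoke (\ref{4.24}), (\ref{52}), and $\varphi ^2=-{\rm id}+\overline \eta \otimes \overline \xi$ in the right order so that the extension from ${\D}$ to $TM$ is legitimate. Once that is handled, the equivalence is immediate from (\ref{4.29}) and the characterization of ${\F}_5$ in (\ref{4.21}).
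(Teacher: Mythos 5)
Your proposal is correct and follows essentially the same route as the paper: both translate the totally umbilical condition (\ref{4.16}) through (\ref{4.25}), (\ref{4.29}) and (\ref{4.24}) into the statement that $A_{\overline N}$ is a scalar multiple of $\varphi $ on all of $TM$, identify that scalar by a trace computation, and match the result with the ${\F}_5$ characterization in (\ref{4.21}), running the computation backwards for the converse. The only slip is dropping the factor $\lambda $ coming from (\ref{4.25}) in the intermediate scalar (the paper obtains $A_{\overline N}X=-\lambda \rho \varphi X$, not $-\rho \varphi X$), which is harmless because the trace identification fixes the coefficient as $-\theta ^*(\overline \xi )/(2n-2)$ in any case.
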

\begin{proof}
Let $(M,\widetilde g)$ be totally umbilical. By using (\ref{4.16}), (\ref{4.25}), (\ref{4.29}) and (\ref{4.24}) we obtain
$A_{\overline N}X=-\lambda \rho \varphi X$ for any $X\in \Gamma (TM)$. From the last equality we find $\lambda \rho=
\displaystyle{\frac{{\rm tr}(A_{\overline N}\circ \varphi )}{2n-2}}$ and hence
\begin{equation}\label{4.31}
A_{\overline N}X=-\displaystyle{\frac{{\rm tr}(A_{\overline N}\circ \varphi )}{2n-2}}\varphi X.
\end{equation}
According to (\ref{4.21}) from (\ref{4.31}) it follows $(M,\varphi ,\overline \xi ,\overline \eta ,g)\in {\F}_5$. Conversely, if
$(M,\varphi ,\overline \xi ,\overline \eta ,g)\in {\F}_5$, then the equality (\ref{4.31}) is valid. By using (\ref{4.29}) and
(\ref{4.25}) we obtain $A^*_\xi PX=\displaystyle{\frac{\theta ^*(\overline \xi )}{\lambda (2n-2)}PX}$, which implies 
$(M,\widetilde g)$ is totally umbilical.
\end{proof}
Analogously we establish the truth of the following
\begin{prop}\label{5.4}
The screen distribution $S(TM)$ of $(M,\widetilde g)$ is totally umbilical iff $(M,\varphi ,\overline \xi ,\overline \eta ,g)$ belongs to the class ${\F}_4$.
\end{prop}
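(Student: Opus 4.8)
The plan is to follow the proof of \propref{5.3} almost verbatim, changing only the umbilicity hypothesis that we feed into the computation. There the assumption was that the \emph{hypersurface} $(M,\widetilde g)$ is totally umbilical, which through (\ref{4.29}) produced a multiple of $\varphi$ and hence the class ${\F}_5$; here the assumption is that the \emph{screen distribution} $S(TM)$ is totally umbilical, which will instead produce a multiple of $\varphi^2$ and hence the class ${\F}_4$. The decisive observation is that, by part (ii) of \thmref{Theorem 4.3}, $S(TM)$ is totally umbilical precisely when $A^*_\xi(PX)=-bk\,\overline J(PX)$ for some $k\in{\F}(M)$; that is, $A^*_\xi(PX)$ is proportional to $\overline J(PX)=\varphi X$ rather than to $PX$, i.e. exactly one power of $\overline J$ away from the condition that governed ${\F}_5$ in \propref{5.3}.

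First I would assume $S(TM)$ totally umbilical and substitute $A^*_\xi(PX)=-bk\,\overline J(PX)$ into the relation $A^*_{\overline \xi}X=-\varphi(A_{\overline N}X)$ of (\ref{4.29}), using the normalisation (\ref{4.25}) to pass between $\xi$ and $\overline \xi$ and (\ref{52}) to replace $\overline J(PX)$ by $\varphi X$. This makes $\varphi(A_{\overline N}(PX))$ proportional to $\varphi X$; applying $\varphi$ once more and using $\varphi^2=-P$ together with $A_{\overline N}X\in{\D}$ from (\ref{4.24}) yields $A_{\overline N}(PX)=c\,PX$ for a smooth function $c$, equivalently $A_{\overline N}=-c\,\varphi^2$. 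Taking the trace and recalling that the screen has rank $2n-2$ pins down $c=\theta(\overline \xi)/(2n-2)$ with $\theta(\overline \xi)={\rm tr}\,A_{\overline N}$, which is exactly the characterising condition of ${\F}_4$ recorded in (\ref{4.21}).

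For the converse I would start from $(M,\varphi,\overline \xi,\overline \eta,g)\in{\F}_4$, i.e. $A_{\overline N}=-\frac{\theta(\overline \xi)}{2n-2}\varphi^2$, and run the same substitutions (\ref{4.29}), (\ref{4.25}), (\ref{52}) in reverse to recover $A^*_\xi(PX)$ as a scalar multiple of $\overline J(PX)$, whence $S(TM)$ is totally umbilical by \thmref{Theorem 4.3}. I expect the only real friction to be bookkeeping rather than any genuine difficulty: keeping the factors $\lambda$ and $b$ of (\ref{4.25}) straight, correctly relating the $A^*_\xi$ of \thmref{Theorem 4.3} to the $A^*_{\overline \xi}$ appearing in (\ref{4.29}), and carrying out the trace normalisation so that the proportionality constant matches the precise form of the ${\F}_4$ condition in (\ref{4.21}) rather than merely being some unspecified scalar.
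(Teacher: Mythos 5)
Your proposal is correct and matches the paper's treatment: the paper offers no separate argument here, stating only that the proposition is established analogously to \propref{5.3}, and your plan is precisely that analogue, with the umbilicity of $S(TM)$ (via \thmref{Theorem 4.3}(ii), equivalently the condition $A_NX=kPX$ of (\ref{4.17})) replacing that of $M$ and producing a multiple of $\varphi ^2$ in place of $\varphi $. The bookkeeping you flag does work out as you expect --- $b=-1/\lambda ^2$ follows from $\overline J\xi =bN$ and (\ref{4.25}), $A^*_{\xi }=-\frac{1}{\lambda }A^*_{\overline \xi }$ by linearity of the screen shape operator in its section, and ${\rm tr}\,P=2n-2$ fixes the scalar --- yielding exactly $A_{\overline N}=-\frac{\theta (\overline \xi )}{2n-2}\varphi ^2$ as required by (\ref{4.21}).
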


\section{Examples of radical transversal lightlike hypersurfaces of almost complex manifolds with Norden metric}\label{sec-6}
\begin{example}\label{6.1}
We consider $\mathbb{R}^{2n+2}=\{\left(u^1,\ldots ,u^{n+1};v^1,\ldots ,v^{n+1}\right)\vert u^i, v^i\in \mathbb{R}\}$ as a complex
Riemannian manifold with the canonical complex structure $\overline J$. In \cite{GaMGri} a metric $\overline g$ on 
$(\mathbb{R}^{2n+2},\overline J)$ was defined by
\[
\overline g(X,X)=-\delta _{ij}\lambda ^i\lambda ^j+\delta _{ij}\mu ^i\mu ^j,
\]
where $\displaystyle{X=\lambda ^i\displaystyle{\frac{\partial}{\partial u^i}+\mu ^i\frac{\partial}{\partial v^i}}}$. It is
easy to check that the canonical complex structure $\overline J$ is an anti-isometry with respect to $\overline g$ and hence 
$(\mathbb{R}^{2n+2},\overline J,\overline g)$ is a complex manifold with Norden metric. As usually, the associated metric to 
$\overline g$ is denoted by $\overline {\widetilde g}$. Identifying the point $p=\left(u^1,\ldots ,u^{n+1};v^1,\ldots ,
v^{n+1}\right)$ in $\mathbb{R}^{2n+2}$ with its position vector $Z$, in \cite{GaMGri} the following real hypersurface $M$ of
$\mathbb{R}^{2n+2}$ was defined 
\[
M: \quad \overline g(Z,\overline JZ)=0; \quad \overline g(Z,Z)={\rm ch}^2t, \quad t>0.
\]
It is clear that $\overline JZ$ is orthogonal to $TM$ with respect to $\overline g$, i. e. $\overline JZ \in TM^\bot $. 
For the vector field  $\overline N=(1/{\rm ch}t) \overline JZ$ we have 
$\overline g(\overline N,\overline N)=-1$, i. e. $\overline N$ is a time-like unit normal to $(M,g)$. Since 
$\overline g(\overline N,\overline J\overline N)=0$, the vector field $\overline J\overline N$ is a space-like unit, which
belongs to $TM$. Hence, $(M,g)$ is a non-degenerate hypersurface of $\mathbb{R}^{2n+2}$ satisfying the conditions (\ref{4.18}).
Then from \thmref{Theorem 5.1} it follows that $(M,\widetilde g)$ is a radical transversal lightlike hypersurface of 
$\mathbb{R}^{2n+2}$ such that $TM^{\widetilde \bot }=span\{\overline J\overline N\}$, ${\rm tr}(TM)=span\{\overline N\}$ and the
screen distribution $S(TM)$ coincides with the complementary orthogonal with respect to $\overline g$ vector subbundle of 
$\overline J(TM^\bot )$ in $TM$. Taking into account that $(\mathbb{R}^{2n+2},\overline J,\overline g)$ is a complex manifold
with Norden metric, from \thmref{Theorem 3.3} we have that $(M,\widetilde g)$ is a CR-manifold.
\par
Moreover, in \cite{GaMGri} were defined an almost contact structure $(\varphi ,\overline \xi ,\overline \eta )$ and B-metric
$g$ on $M$ by (\ref{4.20}) and it was proved that $(M,\varphi ,\overline \xi ,\overline \eta ,g)$ is an almost contact
manifold with B-metric in the class ${\F}_5$. According to \propref{Proposition 5.4} it follows that $(M,\widetilde g)$ is
totally umbilical.
\end{example}

\begin{example}\label{6.2}
We consider the Lie group $GL(2;\mathbb{R})$ with a Lie algebra $gl(2;\mathbb{R})$. 
The real Lie algebra $gl(2;\mathbb{R})$ is spanned by
the left invariant vector fields \\
$\{X_1,X_2,X_3,X_4\}$, where we set
\[
\hspace*{-4mm}
X_1=\left(\begin{array}{ll}
1 & 0
\cr 0 & 0 
\end{array}\right) , \, \,
X_2=\left(\begin{array}{ll}
0 & 1
\cr 0 & 0 
\end{array}\right) , \, \,
X_3=\left(\begin{array}{ll}
0 & 0
\cr 1 & 0 
\end{array}\right) , \, \,
X_4=\left(\begin{array}{ll}
0 & 0
\cr 0 & 1 
\end{array}\right) .
\]
We define an almost complex structure $\overline J$ and a left invariant metric $\overline g$ on $gl(2;\mathbb{R})$ by
\begin{equation}
\overline JX_1=X_4, \quad \overline JX_2=X_3, \quad \overline JX_3=-X_2, \quad \overline JX_4=-X_1
\label{3.53'}
\end{equation}
and
\begin{equation}
\begin{array}{ll}
\overline g(X_i,X_i)=-\overline g(X_j,X_j)=-1; \quad i=1,3; \quad j=2,4;\\
\overline g(X_i,X_j)=0; \quad i\neq j; \quad i,j=1,2,3,4.
\end{array}
\label{3.53''}
\end{equation}
Using (\ref{3.53'}) and (\ref{3.53''}) we check that the metric $\overline g$ is a Norden metric and consequently
$(GL(2;\mathbb{R}),\overline J,\overline g,\overline {\widetilde g})$ is a 4-dimensional almost complex
manifold with Norden metric.
\par
The real special linear group $SL(2;\mathbb{R})=\{A\in GL(2;\mathbb{R}): det(A)=1\}$ 
is a Lie subgroup of $GL(2;\mathbb{R})$ with a Lie algebra $sl(2;\mathbb{R})$ of all $(2\times 2)$ real traceless matrices.  
The Lie algebra $sl(2;\mathbb{R})$ is a 3-dimensional subalgebra of  $gl(2;\mathbb{R})$, spanned by $\{X_1-X_4, X_2, X_3\}$. 
Thus $SL(2;\mathbb{R})$ is a hypersurface of $GL(2;\mathbb{R})$.
We find that the normal space (with respect to $\overline g$) $sl(2;\mathbb{R})^\bot $ is spanned by $\{X_1-X_4\}$. Hence $sl(2;\mathbb{R})\cap sl(2;\mathbb{R})^\bot =sl(2;\mathbb{R})^\bot =span\{\xi =X_1-X_4\}$, 
i.e. $(SL(2;\mathbb{R}),g)$ is a lightlike hypersurface of $GL(2;\mathbb{R})$. We choose a holomorphic with respect to 
$\overline J$ screen distribution $S(sl(2;\mathbb{R}))$, spanned by $\{X_2, X_3\}$. From \thmref{Theorem 3.1}
it follows that $(SL(2;\mathbb{R}),g)$ is a radical transversal lightlike hypersurface and ${\rm tr}(sl(2;\mathbb{R}))$ is
spanned by $\left\{N=\displaystyle{\frac{-X_1-X_4}{2}}\right\}$.

\end{example}

%\bibliography{ijmsample}
%\bibliographystyle{ijmart}

\end{document}